\newcolumntype{L}{>{\raggedright\arraybackslash}X}
\newcolumntype{R}{>{\raggedleft\arraybackslash}X}
\def\={\; = \;}
\def\+{\; + \;}
\def\:{\; \colon \;}
\def\arXiv#1{arXiv:\href{http://arXiv.org/abs/#1}{#1}}
\newcommand{\ol}[1]{\overline{#1}}
\newcommand{\es}[1]{\; #1 \;}
\newcommand{\eps}{\varepsilon}
\newcommand{\sm}{\smallsetminus}
\newcommand{\re}{\operatorname{Re}}
\newcommand{\RR}{{\mathbb{R}}}
\newcommand{\DD}{{\mathbb{D}}}
\newcommand{\ZZ}{{\mathbb{Z}}}
\newcommand{\CC}{{\mathbb{C}}}
\newcommand{\QQ}{{\mathbb{Q}}}
\newcommand{\Pp}{{\mathcal{P}}}
\newcommand{\MZV}{{\mathfrak{Z}}}
\newcommand{\Z}{\zeta}
\newcommand{\ang}[1]{\langle #1\rangle}
\newcommand{\Area}{\operatorname{Area}}
\newcommand{\dl}{\partial}
\newcommand{\Li}{\mathrm{Li}}
\renewcommand{\phi}{\varphi}
\theoremstyle{plain}
\newtheorem{theorem}{Theorem}
\newtheorem{lemma}{Lemma}
\newtheorem{proposition}{Proposition}
\newtheorem{conjecture}{Conjecture}
\newtheorem{corollary}{Corollary}
\theoremstyle{definition}
\newtheorem{remark}{Remark}
\title{On Dirichlet eigenvalues of regular polygons}
\author[David~Berghaus]{David Berghaus}
\address{Bethe Center, Bonn University, Nussallee 12, 53115 Bonn, Germany}
\email{berghaus@th.physik.uni-bonn.de}
\author[Bogdan~Georgiev]{Bogdan~Georgiev}
\address{Fraunhofer IAIS, Institute for Intelligent Analysis and Information  Systems, 53757 Sankt Augustin, Germany}
\email{bogdan.m.georgiev@gmail.com}
\author[Hartmut~Monien]{Hartmut~Monien}
\address{Bethe Center, Bonn University, Nussallee 12, 53115 Bonn, Germany}
\email{hmonien@uni-bonn.de}
\author[Danylo~Radchenko]{Danylo~Radchenko}
\address{ETH Zurich, Mathematics Department, Zurich 8092, Switzerland}
\email{danradchenko@gmail.com}
\begin{document}
\maketitle
\begin{abstract}
	We prove that the first Dirichlet eigenvalue of a regular $N$-gon
	of area $\pi$ has an asymptotic expansion of the form $\lambda_1(1+\sum_{n\ge 3}\frac{C_n(\lambda_1)}{N^n})$ as $N\to\infty$, 
	where $\lambda_1$ is the first Dirichlet eigenvalue of the unit disk 
	and $C_n$ are polynomials whose coefficients belong to the space 
	of multiple zeta values of weight~$n$. We also explicitly compute these polynomials
	for all $n\le 14$.
\end{abstract}

\section{Introduction} 
Let $\Omega$ be a bounded connected domain with piecewise smooth boundary 
in $\RR^2$ and let us denote by 
$\Delta\coloneqq\frac{\dl^2}{\dl x^2}+\frac{\dl^2}{\dl y^2}$ 
the standard flat Laplace operator. When one considers suitable spaces of 
functions on $\ol{\Omega}$ with Dirichlet boundary conditions 
(i.e., vanishing on $\dl \Omega$) it is well known by the spectral theorem 
that~$\Delta$ possesses a discrete spectrum 
$\{\lambda_{k}(\Omega)\}_{k=1}^{\infty}$ of eigenvalues
\begin{equation*}
0<\lambda_{1}\le \lambda_{2}\le \lambda_{3}\le 
\dots \,
\end{equation*}
with corresponding finite-dimensional eigenspaces $\mathrm{Eig}(\lambda_{k})$ 
of smooth eigenfunctions. In other words each eigenfunction $\phi \in \mathrm{Eig}(\lambda_{k})$ satisfies the following boundary value PDE:
\begin{equation*}
\begin{cases}
\Delta \phi + \lambda_k\phi \= 0\,,\\
\phi|_{\dl \Omega} \= 0\,.
\end{cases}
\end{equation*}
One can consider many aspects regarding the asymptotics of Dirichlet 
eigenvalues and Dirichlet eigenfunctions, a large portion of which could be 
regarded as ``classical'' and being intensely studied: without aiming at being 
thorough, we just mention, for example, various forms of Weyl laws prescribing 
the asymptotics of large eigenvalues $ \lambda_k$ in terms of the underlying 
geometric data; concentration phenomena for eigenfunctions and level set 
distribution; behaviour of eigenvalues with respect to domain perturbation; 
etc. For a very thorough and accessible overview we refer to 
the treatments in \cite{Cha}, \cite{Ze}.

In this note we are interested in the behavior of the eigenvalues with 
respect to domain perturbations in the special case of regular polygons. 
Let $\Pp_N$ be a regular polygon of area $\pi$ with $N\ge3$ sides. 
We study the behavior of $\lambda_k(\Pp_N)$ as $N$ goes to
infinity. More precisely, we are interested in computing the coefficients 
$C_{k,n}$ of the asymptotic series
\begin{equation}\label{eq:asympt}
\frac{\lambda_k(\Pp_N)}{\lambda_k(\DD)} \es\sim 
1+\frac{C_{k,1}}{N}+\frac{C_{k,2}}{N^2}+\frac{C_{k,3}}{N^3}+\dots\,,
\end{equation}
where $\DD$ is the unit disk.

The above problem has been considered in several previous works. As an outcome 
of the works \cite{Mo}, \cite{GS1}, \cite{GS2} the first four 
coefficients $C_{1,i} $ were computed and, to a certain surprise, expressed 
as integer multiples of the Riemann zeta function. Roughly speaking, the 
corresponding methods (Calculus of Moving Surfaces) require one to consider 
an explicit deformation of the polygon into a disk and study the evolution 
of the corresponding eigenfunction.

Later on, the next two coefficients $C_{1,5}, C_{1,6}$ were also computed 
and given in a similar form (cf. \cite{Bo}). Recently, in \cite{Jo} 
an expression for the next two coefficients $C_{1,7}, C_{1,8}$ was 
proposed as a result of high-precision numerics and certain linear 
regression methods.

To summarize the above results, assuming that the polygons under consideration 
are normalized so that $\Area(\Pp_N) = \pi$, the asymptotic expansion 
(with proposed seventh and eighth terms being conjectural) is the following
\begin{equation} \label{eq:expansionsmall}
\begin{split}
\frac{\lambda_1(\Pp_N)}{\lambda_1} \= 
1&+ \frac{4 \zeta(3)}{N^3} 
+ \frac{(12 - 2 \lambda_1) \zeta(5)}{N^5} 
+ \frac{(8+4\lambda_1) \zeta^2(3)}{N^6} \\ 
&+ \frac{(36 - 12 \lambda_1 - \frac{1}{2}\lambda_1^2) \zeta(7)}{N^7} 
+ \frac{(48 + 8 \lambda_1 + 2 \lambda_1^2) \zeta(3) \zeta(5)}{N^8} 
+ O(N^{-9}) \,.
\end{split}
\end{equation}
Here $\lambda_1=\lambda_1(\DD)$ is the first Dirichlet eigenvalue of the unit 
disk. Recall that $\lambda_1=j_{0,1}^2$, where $j_{0,1}$ is the smallest 
positive zero of the Bessel function of the first kind $J_0$.

Formula~\eqref{eq:expansionsmall} might lead one to suspect that all higher 
coefficients of the asymptotic expansion can be expressed as polynomials in 
$\lambda_1$ with coefficients that are polynomials in odd zeta values 
$\zeta(2m+1)$, $m\ge1$. As we will show below, this is indeed the case for 
the first $10$ coefficients of the expansion, but, assuming some widely 
believed algebraic independence results, the $11$-th coefficient is no longer 
of this form.

As further motivation we note a couple of related problems. Drawing inspiration from the Faber-Krahn inequality and a conjecture of P\'olya and Szeg\"o (which states that among all $n$-gons with the same area, the regular $n$-gon has the smallest first Dirichlet eigenvalue), it was conjectured in \cite{AF} that for all $N \geq 3$ and $\Area(\Pp_N) = \pi $, the first Dirichlet eigenvalues are monotonically decreasing in $N$, i.e.,
\begin{equation*}
\lambda_1(\mathcal{P}_N) > \lambda_1(\mathcal{P}_{N+1})\,.
\end{equation*}
So far the monotonicity has been confirmed by numerical experiments 
(cf.~\cite{AF}). Note, however, that since $\zeta(3)>0$, 
equation~\eqref{eq:expansionsmall} implies this inequality for all sufficiently 
large $N$. For further results along the theme of Faber-Krahn and eigenvalue 
optimization via regular polygons under the presence of various constraints 
(in- and circumradius normalization, etc.) we refer to \cite{Ni}, \cite{So} 
and the references therein. 

A further intriguing application of the above eigenvalue asymptotics can be 
found in \cite{Oi}, where the Casimir energy of a scalar field on $\Pp_N$ (and 
further generalized to $\Pp_N \times \mathbb{R}^k$) has been studied. For 
background we refer to \cite{Oi} and the accompanying references.

Before describing our main results, we briefly recall the definition of 
multiple zeta values. Multiple zeta values (MZVs) are real numbers defined by
	\[\zeta(m_1,\dots,m_r) \es{\coloneqq} \sum_{0<n_1<n_2<\dots<n_r}
	\frac{1}{n_1^{m_1}n_2^{m_2}\dots n_r^{m_r}}\,,\]
where $m_1,\dots,m_r$ are positive integers and $m_r>1$.
A multiple zeta value $\zeta(m_1,\dots,m_r)$ is said to have weight $n$
if $m_1+\dots+m_r=n$. We denote the $\QQ$-linear span of all multiple zeta 
values of weight~$n$ by~$\MZV_n$. Our main result is the following theorem.
\begin{theorem}
	\label{thm:mainthm}
	There exists a sequence of polynomials $C_n\in \MZV_{n}[\lambda]$, $n\ge1$,
	where $\MZV_n$ is the space of multiple zeta values of weight $n$, 
	such that 
	\begin{equation} \label{eq:mainid}
	\frac{\lambda_k(\Pp_N)}{\lambda_k} \es\sim
	1+\sum_{n=1}^{\infty}\frac{C_n(\lambda_k)}{N^n}
	\end{equation}
	whenever $\lambda_k$ is a radially-symmetric Dirichlet eigenvalue 
	of the unit disk.
\end{theorem}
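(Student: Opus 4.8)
The plan is to set up a conformal-mapping framework that turns the eigenvalue problem on $\Pp_N$ into a perturbed eigenvalue problem on the unit disk, and then to track the dependence on $1/N$ explicitly. First I would fix the Riemann map $\psi_N\colon\DD\to\Pp_N$ (normalized so that $\psi_N(0)=0$, $\psi_N'(0)>0$), which for a regular $N$-gon of area $\pi$ is given in closed form by the Schwarz--Christoffel integral
\[
\psi_N(z) \= c_N\int_0^z (1-w^N)^{-2/N}\,\dw\,,
\]
with $c_N$ determined by the area normalization. The key analytic input is that $(1-w^N)^{-2/N}=\exp\bigl(-\tfrac{2}{N}\log(1-w^N)\bigr)$, and expanding the logarithm and then the exponential produces a series in $1/N$ whose coefficients are polynomials in $\log(1-w^N)$, i.e.\ in the functions $w^{N},w^{2N},\dots$; crucially, after integrating term by term the constants that appear are values of $\sum_{j\ge1} j^{-m}\,(\cdot)$ evaluated in a way that, once one accounts for the $N$-dependence carefully, collapses to rational combinations of $\zeta$-values and more generally MZVs. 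I would make this precise by writing $\psi_N(z)=z\bigl(1+\sum_{n\ge1} a_n(z)N^{-n}\bigr)$ with each $a_n$ holomorphic on $\DD$ and with an explicit description of $a_n$ as a polynomial (in a suitable sense) with MZV coefficients of weight $n$.

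Next I would transport the Helmholtz equation $\Delta u+\lambda u=0$ on $\Pp_N$ back to $\DD$ via $u=v\circ\psi_N^{-1}$. Conformal invariance of the Laplacian gives $\Delta v + \lambda\,|\psi_N'|^2\,v=0$ on $\DD$ with $v|_{\dl\DD}=0$, so we are looking at the eigenvalue problem for the operator $\Delta$ against the weight $\rho_N\coloneqq|\psi_N'|^2$. Since $\psi_N'(z)=c_N(1-z^N)^{-2/N}$, the weight has the expansion $\rho_N=1+\sum_{n\ge1}\rho_n\,N^{-n}$ where the $\rho_n$ are (real-analytic, rotationally quasi-periodic) functions on $\DD$ built from $\re$ and $|\cdot|$ of the $a_n$'s, again with MZV coefficients of weight $n$. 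For $k$ a radial eigenvalue of $\DD$ the unperturbed eigenfunction is $v_0(z)=J_0(j\,|z|)$ with $j^2=\lambda_k$, and standard analytic (Rellich--Kato) perturbation theory applies: because $\lambda_k$ is simple among radial eigenvalues and the relevant eigenfunction is $\dl\DD$-symmetric, the perturbed eigenvalue $\lambda_k(\Pp_N)$ and eigenfunction admit asymptotic expansions in $1/N$, and the coefficients are computed by the usual recursive Rayleigh--Schrödinger formulas: $\lambda_k(\Pp_N)/\lambda_k \sim 1+\sum_{n}C_n N^{-n}$ with
\[
C_n \= -\frac{\langle \rho_n v_0, v_0\rangle}{\langle v_0,v_0\rangle} \+ (\text{lower-order correction terms involving }\rho_1,\dots,\rho_{n-1}\text{ and the resolvent})\,.
\]
One then has to check that each such term lies in $\MZV_n[\lambda_k]$: the $\lambda_k$-polynomiality comes from the fact that every integral $\int_\DD (\text{product of }\rho_j\text{'s and powers of }|z|) \,J_0(j|z|)^2\,dA$, after using $J_0''+ \tfrac1r J_0' + j^2 J_0=0$ repeatedly and integrating by parts, reduces to $\QQ$-linear combinations of $j^{2s}=\lambda_k^{s}$ times fixed transcendental constants; and those constants are the MZVs, coming from the Fourier-mode structure of the $\rho_j$ (only the rotationally invariant part of each integrand survives the $\dl\DD$-average, and that part is manufactured out of $\log(1-z^N)$-type building blocks whose moments against $J_0$ are MZVs). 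Finally one verifies $C_1=C_2=0$ (matching \eqref{eq:expansionsmall}), though the theorem as stated does not require this.

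The main obstacle I expect is the bookkeeping that shows the relevant constants are genuinely MZVs of the correct weight, rather than merely ``some periods.'' Concretely, one must identify the constants arising from integrating powers of $\log(1-z^N)$ against $J_0(j|z|)^2 r\,dr$ (after the $\theta$-average) and from the Schwarz--Christoffel normalization constant $c_N$; the weight-grading has to be respected at every stage, which forces a careful choice of which auxiliary series (the $\tfrac2N$-expansion of the exponent, the expansion of $c_N$, and the expansion of the resolvent contributions) are combined. I would handle this by proving a closure lemma: the graded ring generated over $\QQ[\lambda_k]$ by the functions $\{1\}\cup\{z^{mN}, \bar z^{mN}\}$ with the $N^{-1}$-filtration, when paired with $J_0(j|z|)^2$ and pushed through the Rayleigh--Schrödinger recursion, lands in $\bigoplus_n \MZV_n[\lambda_k] N^{-n}$ — the key sub-step being that the iterated integrals generated by $\log(1-z^N)$ and its powers are exactly the ones whose regularized values are MZVs (this is where the shuffle/stuffle structure, hinted at by the \texttt{shuffle} package in the preamble, enters). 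Once that lemma is in place, the theorem follows by assembling the pieces and reading off the weight of each coefficient.
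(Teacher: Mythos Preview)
Your overall framework---pulling back via the Schwarz--Christoffel map and treating the result as a perturbation of the disk eigenvalue problem---is the right starting point and agrees with the paper's setup. But two genuine gaps would prevent the argument from going through as written.

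First, your expansion $\rho_N=1+\sum_n\rho_n N^{-n}$ is not a standard perturbation series: the coefficient functions $\rho_n$ are built from $\log(1-z^N)$ and hence still depend on $N$. Rellich--Kato and Rayleigh--Schr\"odinger apply to a fixed perturbing operator with a small coupling, not to this two-scale situation in which both the parameter $1/N$ and the perturbation vary together. Worse, the second- and higher-order Rayleigh--Schr\"odinger corrections involve the resolvent, i.e., a sum over \emph{all} disk eigenpairs $(j_{m,n}^2,J_m(j_{m,n}r)e^{im\theta})$; since the angular content of $\rho_n$ lives in the modes $e^{\pm imN\theta}$, the nonradial contributions couple to Bessel functions $J_{mN}$ with zeros $j_{mN,n}\sim mN$, and there is no visible mechanism by which such sums collapse to polynomials in $\lambda_k$ with MZV coefficients. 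The paper avoids the resolvent altogether: it uses Vekua's integral representation for Helmholtz solutions (equation~\eqref{eq:vekua}) to parametrize candidate eigenfunctions by a single holomorphic datum $V$, makes the change of variables $z\mapsto z^{1/N}$ so that the conformal data become $F_N(z)={}_2F_1(2/N,1/N,1+1/N;z)$ with $N$-independent polylogarithmic expansion coefficients (Nielsen polylogarithms, equation~\eqref{eq:hypergexp}), and then enforces the boundary condition order by order in $1/N$, appealing at the end to the Moler--Payne a~posteriori bound (Proposition~\ref{prop:realeigenvalues}) rather than to analytic perturbation theory.

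Second, your ``closure lemma'' is doing essentially all the work, and you have not identified the mechanism that produces MZVs of the correct weight. In the paper this role is played by Proposition~\ref{prop:polylogproduct}: for convergent words $u,v$ one has $\Li_u(z)\Li_v(z^{-1})=A_{u,v}+\Li_{\alpha(u,v)}(z)+\Li_{\beta(u,v)}(z^{-1})$ with $A_{u,v}\in\MZV_{|u|+|v|}$ and $\alpha,\beta$ again convergent. This is exactly the statement needed to solve, at each step of the recursion, a Dirichlet problem on $|z|=1$ with boundary data of the form $\re\Li_u(z)\Li_v(\bar z)$; the constant $A_{u,v}$ is what feeds into $\kappa_n$ and hence $C_n$, and the weight bookkeeping is automatic. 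Likewise, the $\lambda_k$-polynomiality is not a consequence of ``integrating the Bessel ODE by parts'' against products of $\rho_j$'s: in the paper it comes from the explicit expansion of $J_0(\alpha e^x)/(\alpha J_1(\alpha))$ around a zero $\alpha$ of $J_0$ (Proposition~\ref{prop:besselasymp}), which at each order in $x$ is visibly polynomial in $\alpha^2=\lambda_k$. Without analogues of these two propositions your sketch has no engine, and the description ``moments of $\log(1-z^N)$ against $J_0(j|z|)^2$ are MZVs'' is not close to a proof.
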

Here radially-symmetric eigenvalues are $\lambda_k$'s for which 
the corresponding eigenfunction is radially-symmetric. Explicitly, the theorem 
applies whenever $\lambda_k=j_{0,m}^2$, where $j_{0,m}$ is the $m$-th root 
of the Bessel function $J_0(x)$. In particular, the theorem applies
in the case~$k=1$.

Our proof of Theorem~\ref{thm:mainthm} is based on an asymptotic version of the 
``method of particular solutions'' (see, e.g.,~\cite{FoHeMo},~\cite{MoPa}) 
and it provides an explicit procedure for producing increasingly better 
approximations (at least when $N\to\infty$) to both the eigenvalues and the 
eigenfunctions (as long as they correspond to the radially symmetric 
eigenfunctions on the unit disk).
We find that not only the eigenvalues, but the eigenfunctions themselves
have asymptotic expansions in powers of $1/N$ with interesting coefficients
(these coefficients turn out to be multiple polylogarithms, see 
Table~\ref{tab:vn}).

\begin{table}[h!]
	{\def\arraystretch{1.5}
		\begin{tabularx}{\linewidth}{r |R}
			$n$ & $C_n(\lambda)$\\
			\hline
			$1$   & $0$\\
			$2$   & $0$\\
			$3$   & $4\Z_3$\\
			$4$   & $0$ \\
			$5$   & $-2\Z_{5}\lambda + 12\Z_{5}$\\
			$6$   & $4\Z_{3}^2\lambda + 8\Z_{3}^2$\\
			$7$   & $-\tfrac{1}{2}\Z_{7}\lambda^2 - 12\Z_{7}\lambda + 36\Z_{7}$\\
			$8$   & $2\Z_{5}\Z_{3}\lambda^2 + 8\Z_{5}\Z_{3}\lambda + 48\Z_{5}\Z_{3}$\\
			$9$   & $-\tfrac{1}{4}\Z_{9}\lambda^3 - \tfrac{104}{9}\Z_{9}\lambda^2
			+(-\tfrac{146}{3}\Z_{9}+\tfrac{80}{3}\Z_3^3)\lambda
			+(\tfrac{340}{3}\Z_{9} + \tfrac{32}{3}\Z_3^3)$\\
			$10$   & $(\Z_{7}\*\Z_{3} + \Z_{5}^2)\lambda^3 
			+(39\Z_{7}\Z_{3} -6\*\Z_{5}^2)\lambda^2
			+(-24\Z_{7}\Z_{3} - 12\Z_{5}^2)\lambda 
			+(144\Z_{7}\Z_{3} + 72\Z_{5}^2)$\\
			$11$   &{\footnotesize $-\tfrac{5}{32}\Z_{11}\lambda^4 
				+(-\tfrac{661}{60}\Z_{11}+\tfrac{1}{5}\Z_{3,5,3}^{\mathrm{sv}})\lambda^3
				+(-\tfrac{1623}{20}\Z_{11} + 80\Z_5\Z_3^2 + 
				\tfrac{54}{5}\Z_{3,5,3}^{\mathrm{sv}})\lambda^2 
				+(-176\Z_{11} + 176\Z_5\Z_3^2)\lambda$ $+(372\Z_{11} + 96\Z_5\Z_3^2)$}\\
			$12$   &{\footnotesize $(\tfrac{5}{8}\*\Z_{9}\*\Z_{3} + \tfrac{11}{8}\Z_{7}\Z_{5})\lambda^4 
				+(\tfrac{107}{3}\Z_{9}\Z_{3} + \tfrac{47}{2}\Z_{7}\Z_{5})\lambda^3
				+(456\Z_{9}\Z_{3} - 207\Z_{7}\Z_{5} - 16\Z_{3}^4)\lambda^2$
				$+(-\tfrac{488}{3}\Z_{9}\Z_{3}-216\Z_{7}\Z_{5}
				+\tfrac{272}{3}\Z_{3}^4)\lambda
				+(\tfrac{1360}{3}\Z_{9}\Z_{3}+432\Z_{7}\*\Z_{5}+\tfrac{32}{3}\Z_{3}^4)$}\\
			$13$   & {\footnotesize $-\tfrac{7}{64}\Z_{13}\lambda^5 + (-\tfrac{226501}{16800}\Z_{13}+\Z_7\Z_3^2 - \tfrac{31}{10}\Z_5^2\Z_3  - \tfrac{157}{1400}\Z_{5,3,5}^{\mathrm{sv}}+ \tfrac{5}{56}\Z_{3,7,3}^{\mathrm{sv}})\lambda^4 
				+ (-\tfrac{1283839}{8400}\Z_{13} + 34\Z_7\Z_3^2 $ $+ \tfrac{256}{5}\Z_5^2\Z_3-\tfrac{549}{350}\Z_{5,3,5}^{\mathrm{sv}} + \tfrac{59}{28}\Z_{3,7,3}^{\mathrm{sv}})\lambda^3 
				+ (-\tfrac{1447393}{1400}\Z_{13} + 1236\Z_7\Z_3^2 - \tfrac{1128}{5}\Z_5^2\Z_3 - \tfrac{12339}{175}\Z_{5,3,5}^{\mathrm{sv}} + \tfrac{747}{14}\Z_{3,7,3}^{\mathrm{sv}})\lambda^2$
				$+ (-618\Z_{13}+336\Z_7\Z_3^2 + 336\Z_5^2\Z_3)\lambda + (1260\Z_{13} + 288\Z_7\Z_3^2 + 288\Z_5^2\Z_3)$}\\
			$14$   & {\footnotesize $
				(\tfrac{7}{16}\Z_{11}\Z_{3}
				+ \Z_{9}\Z_{5}
				+ \tfrac{9}{16}\Z_{7}^2) \lambda^5
				+ (\tfrac{10169}{240}\Z_{11} \Z_{3}
				+ \tfrac{467}{8}\Z_{9}\Z_{5}
				+ \tfrac{175}{16}\Z_{7}^2
				- \tfrac{1}{5}\Z_{3,5,3}^{\mathrm{sv}}\Z_{3}) \lambda^4 
				+ (\tfrac{20381}{30}\Z_{11} \Z_{3}
				+ \tfrac{1300}{9}\Z_{9}\Z_{5}
				+ \tfrac{483}{4}\Z_{7}^2
				+ 40\Z_{5}\Z_{3}^3
				+ \tfrac{28}{5}\Z_{3,5,3}^{\mathrm{sv}}\Z_{3}) \lambda^3 
				+ (\tfrac{32902}{5}\Z_{11} \Z_{3}
				- \tfrac{7306}{3}\Z_{9}\Z_{5}
				- \tfrac{3627}{2}\Z_{7}^2
				+ \tfrac{3664}{3}\Z_{5}\Z_{3}^3
				+ \tfrac{1296}{5}\Z_{3,5,3}^{\mathrm{sv}}\Z_{3}) \lambda^2 
				+ (- 664\Z_{11}\Z_{3}
				- \tfrac{2824}{3}\Z_{9}\Z_{5}
				- 540\Z_{7}^2
				+ \tfrac{2752}{3}\Z_{5}\Z_{3}^3) \lambda
				+ (1488\Z_{11}\Z_{3}
				+ 1360\Z_{9}\Z_{5}
				+ 648\Z_{7}^2
				+ 128\Z_{5}\Z_{3}^3 )$}\\
			\hline
	\end{tabularx}}
	\smallskip
	\caption{Coefficients of the asymptotic expansion for $n\le 14$}
	\label{tab:cn}
\end{table}

Our approach gives an explicit symbolic algorithm for computing 
the polynomials $C_n$. We have calculated $C_n$ 
for $n\le 12$ using an implementation of this algorithm in the computer algebra 
system SAGE~\cite{SAGE} and for $n\le 14$ using an optimized parallel 
implementation in Julia~\cite{Julia}. As an immediate corollary we confirm 
the~$7$-th and~$8$-th terms in the asymptotic expansion~\eqref{eq:asympt} that 
were conjectured in~\cite{Jo}. We collect the results of our calculations in 
Table~\ref{tab:cn}. The initial expressions in terms of MZVs that we get by 
directly applying our algorithm are rather unwieldy and to obtain the simpler 
expressions given in the table we have used the MZV Datamine~\cite{BBV}.
In the table we use the notation $\Z_n\coloneqq\zeta(n)$ and
\begin{align*}
  	\Z_{3,5,3}^{\mathrm{sv}} &\coloneqq 2\zeta(3,5,3)-2\zeta(3)\zeta(3,5)-10\zeta(3)^2\zeta(5) \,,\\
  	\Z_{5,3,5}^{\mathrm{sv}} &\coloneqq 
  	2\zeta(5,3,5)-22\zeta(5)\zeta(3,5)-120\zeta(5)^2\zeta(3)-10\zeta(5)\zeta(8) \,,\\
  	\Z_{3,7,3}^{\mathrm{sv}} &\coloneqq 2\zeta(3,7,3)-2\zeta(3)\zeta(3,7)-28\zeta(3)^2\zeta(7)-24\zeta(5)\zeta(3,5) -144\zeta(5)^2\zeta(3)-12\zeta(5)\zeta(8)
\end{align*}
for the first few nontrivial single-valued MZVs.
The space of single-valued multiple zeta values of weight~$n$ is an 
important subspace of~$\MZV_n$ that was introduced by Brown~\cite{Br2}.
Single-valued MZVs appear, for example, in computation of 
string amplitudes, and as coefficients of Deligne's associator (for other 
examples, see the references in~\cite{Br2}).

\begin{conjecture} \label{conj:singlevalued}
   	The polynomial $C_n(\lambda)$ belongs to $\MZV_{n}^{\mathrm{sv}}[\lambda]$
   	for all $n\ge 1$, where $\MZV_{n}^{\mathrm{sv}}$ denotes the space of
   	single-valued multiple zeta values of weight~$n$.
\end{conjecture}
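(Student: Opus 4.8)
The plan is to lift the perturbative construction behind Theorem~\ref{thm:mainthm} into Brown's framework of single-valued multiple polylogarithms, and to show that single-valuedness is \emph{forced}, order by order, by the reflection symmetry of the problem.

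First, I would use the fact that $\Pp_N$ is invariant under complex conjugation $z\mapsto\ol z$. Fixing a symmetry-respecting conformal identification of $\DD$ with $\Pp_N$ (whose Schwarz--Christoffel preimages are exactly the $N$-th roots of unity), the radially-symmetric eigenfunction pulls back to a genuine single-valued, real-analytic function on the disk that is invariant under $z\mapsto\ol z$. Hence each coefficient $v_n$ of its $1/N$-expansion---the functions recorded in Table~\ref{tab:vn}---is itself single-valued and real-analytic on $\DD$ away from the roots of unity, where the only singularities sit.

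Second, and this is the crux, I would identify the $v_n$ with $\RR$-linear combinations of single-valued multiple polylogarithms $L^{\mathrm{sv}}_w$. By construction the $v_n$ solve inhomogeneous Bessel-type equations whose iterated-integral solutions are, a priori, \emph{multivalued} multiple polylogarithms; the content of this step is that the monodromy cancels. Since $v_n$ is single-valued, has singularities only among roots of unity, and satisfies the holomorphic differential equation of a multiple polylogarithm, Brown's uniqueness theorem for single-valued solutions should pin it down as the corresponding $L^{\mathrm{sv}}_w$. The eigenvalue coefficient $C_n$ is then extracted by imposing the boundary condition that fixes $\lambda_k(\Pp_N)$, which amounts to evaluating these single-valued functions at a single algebraic point. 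As $N\to\infty$ the $\ZZ/N$-symmetric sums over roots of unity resum, so the cyclotomic contributions collapse and one is left with values of the shape $L^{\mathrm{sv}}_w(1)\in\MZV_n^{\mathrm{sv}}$, giving $C_n(\lambda)\in\MZV_n^{\mathrm{sv}}[\lambda]$.

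The main obstacle is the second step: making ``single-valued order by order'' rigorous. Concretely, one must control the singularity locus---confining it to $\{0,1,\infty\}$ after the root-of-unity sums are resummed---and then invoke uniqueness of single-valued lifts in the correct weight-graded setting. Equivalently, at the motivic level one must show that the motivic lift $C_n^{\mathfrak m}$ lies in the image of Brown's single-valued projection $\mathrm{sv}$; while the reflection symmetry $z\mapsto\ol z$ makes this extremely plausible, turning it into a proof requires a genuinely motivic (rather than merely numerical) argument that the monodromy of the perturbative integrals vanishes at every order. A successful such argument would simultaneously explain why only odd zeta values and single-valued MZVs---and never $\Z_2$ or the other even periods---appear in Table~\ref{tab:cn}.
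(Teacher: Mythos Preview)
This statement is labelled a \emph{conjecture} in the paper and is not proved there: the authors only verify it for $n\le 14$ by explicit computation (Table~\ref{tab:cn}) and cite numerical evidence for $n=15,16$. There is therefore no proof in the paper to compare against, and your proposal---as you yourself concede in the final paragraph---is not a proof either but a heuristic plan with an unresolved ``main obstacle''.

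Beyond that self-identified gap, the sketch rests on a conflation of two different meanings of ``single-valued''. The pullback of the eigenfunction to~$\DD$ is trivially single-valued there, since it is a well-defined function on a simply connected domain; the same holds for the holomorphic auxiliaries $V_n(z)=\Li_{v_n}(z)$ in Table~\ref{tab:vn}, which are ordinary multiple polylogarithms given by convergent power series in~$\DD$. Brown's single-valued polylogarithms $L^{\mathrm{sv}}_w$ are something else entirely: they are real-analytic, \emph{non-holomorphic} functions on $\CC\sm\{0,1\}$ (for instance $L^{\mathrm{sv}}_{x_1}(z)=-\log|1-z|^2$), characterised by trivial monodromy on the thrice-punctured sphere, not by well-definedness on a disk. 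Nothing in the reflection symmetry $z\mapsto\ol z$---which only says the eigenfunction is real---forces the constants $A_{u,v}$ coming out of Proposition~\ref{prop:polylogproduct} (and hence the $\kappa_n$ and $C_n$) to land in $\MZV_n^{\mathrm{sv}}$ rather than the full $\MZV_n$. Your appeal to a ``uniqueness theorem for single-valued solutions'' therefore does not bite: the $V_n$ are not candidates for $L^{\mathrm{sv}}_w$, and the boundary matching on $|z|=1$ that actually produces the constants involves no global monodromy at all. (A smaller inaccuracy: there are no ``$\ZZ/N$-symmetric sums over roots of unity'' to resum; the paper passes to a single fundamental sector via $z\mapsto z^{1/N}$, so the only boundary singularity is at $z=1$ from the outset and cyclotomic periods never enter.)
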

The results given in Table~\ref{tab:cn} confirm this conjecture for $n\le 14$, 
and in~\cite{BJMR} we give strong numerical evidence in its support 
also for $n=15$ and $n=16$. If true, Conjecture~\ref{conj:singlevalued} 
would also explain the curious fact that when $\Pp_N$ is normalized to have 
area~$\pi$ (as opposed to, say, normalizing $\Pp_N$ to have 
circumradius~1), the low order coefficients of the resulting asymptotic 
expansion do not involve even zeta values (see~\cite[p.~125]{Bo}).

By analyzing the general recursion for $C_n(\lambda)$ obtained in the proof of Theorem~\ref{thm:mainthm} we obtain a formula for the generating function 
of the first two coefficients of the polynomials $C_n(\lambda)$.
\begin{theorem}\label{thm:mainthm2}
	The coefficients $C_n(0)$ and $C_n'(0)$ satisfy the generating 
	series identity
	\begin{equation}\label{eq:constterm}
	\sum_{n\ge 0}(C_n(0)+C_n'(0)\lambda)z^n \= 
	\frac{\Gamma(1+z)^2\Gamma(1-2z)}{\Gamma(1-z)^2\Gamma(1+2z)}
	\Big(1-\frac{\lambda}{2}\sum_{n\ge1}\frac{(2z)_n^2z^3}{n!^2(z+n)^3}\Big)\,,
	\end{equation}
	where $(x)_n=x(x+1)\dots (x+n-1)$ denotes the rising Pochhammer symbol.
\end{theorem}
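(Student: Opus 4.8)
The plan is to take the recursion for the polynomials $C_n(\lambda)$ produced in the proof of Theorem~\ref{thm:mainthm}, reduce it modulo $\lambda^2$ so that it tracks only the constant terms $C_n(0)$ and the linear coefficients $C_n'(0)$, collect these two families into generating functions in $z\coloneqq 1/N$, and identify the resulting closed forms. Concretely, I would first recast that recursion through the Schwarz--Christoffel map $f(\zeta)=c_N\int_0^\zeta(1-t^N)^{-2/N}\,\dt$, which sends the unit disk onto $\Pp_N$ and turns the eigenvalue problem into the weighted problem $\Delta v+\Lambda\,|f'|^2v=0$ on $\DD$ with $v|_{\dl\DD}=0$. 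The normalization $\Area(\Pp_N)=\pi$ forces $|c_N|^{-2}=K(z)$, where
\[
K(z)\;\coloneqq\;\sum_{j\ge 0}\frac{(2z)_j^2}{j!^{\,2}}\cdot\frac{z}{z+j}\;=\;\frac{1}{\pi}\int_\DD\bigl|(1-\zeta^N)^{-2/N}\bigr|^2,
\]
and the recursion of Theorem~\ref{thm:mainthm} then takes the shape
\[
\frac{\lambda_k(\Pp_N)}{\lambda_k}\;\sim\;K(z)\Bigl(1+\sum_{n\ge 1}M_n(z)\Bigr),
\]
where $M_n(z)$ is the $n$-th Rayleigh--Schr\"odinger correction for the perturbation $|f'|^2-1$ of the flat weight. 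Each $M_n$ is a power series in $z$ with coefficients in $\QQ[\lambda]$: writing $a=\sqrt\lambda$ and using $J_0(a)=0$ together with the Bessel equation, every inner product of Bessel eigenfunctions against powers of $r$ reduces to such a series; for instance, using $\sum_k c_k a^{2k}=J_0(a)^2=0$ and $\sum_k kc_k a^{2k}=0$ to telescope,
\[
\int_0^1 J_0(ar)^2\,r^{2Nj+1}\,\dx\;=\;J_1(a)^2\sum_{m\ge 2}\frac{(-1)^m z^{m+1}}{2\,j^{m+1}}\,P_m(\lambda),\qquad P_m(\lambda)=\frac{(D{+}1)^mJ_0^2\big|_{a}}{J_1(a)^2}\in\QQ[\lambda],
\]
with $D=\tfrac{a}{2}\tfrac{d}{da}$ and $P_2(\lambda)=\tfrac\lambda2$.

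Next I would read off the constant terms. The key point is that $P_m(0)=0$ for every $m$: the operator $D+1$ maps the span of $\{(J_0')^2,\ J_0 J_0',\ J_0^2\}$ into itself over $\QQ[\lambda]$, and its specialization at $a=0$ fixes $J_0^2$ while annihilating the $(J_0')^2$-component, so $(D{+}1)^mJ_0^2$ has vanishing $(J_0')^2$-part at $a=0$. The same vanishing propagates through the recursion and forces $M_n|_{\lambda=0}=0$ for all $n$, whence $\sum_{n\ge 0}C_n(0)z^n=K(z)$. Using $\tfrac1{z+j}=\tfrac1z\tfrac{(z)_j}{(z+1)_j}$ one recognizes $K(z)={}_3F_2(2z,2z,z;\,1,\,1+z;\,1)$, which is well poised with $a=b=2z$, $c=z$ (the lower parameters being $1+a-b=1$ and $1+a-c=1+z$), so Dixon's summation theorem evaluates it as $\tfrac{\Gamma(1+z)^2\Gamma(1-2z)}{\Gamma(1-z)^2\Gamma(1+2z)}$ (equivalently, $K(z)=\prod_{n\ge1}\tfrac{(n-z)^2(n+2z)}{(n+z)^2(n-2z)}$). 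This settles the $\lambda^0$ part of \eqref{eq:constterm}.

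For the linear coefficients I would argue that only $M_1$ contributes modulo $\lambda^2$. Indeed, every factor ``$|f'|^2-1$'' paired against an eigenfunction vanishing on $\dl\DD$ produces a matrix element carrying an explicit factor $a=\sqrt\lambda$ (from $J_0(ar)=a\,J_1(a)(1-r)+O((1-r)^2)$ near $r=1$), so $M_n$ is divisible by $\lambda^2$ for $n\ge 2$, as are the normalization terms of the form $\mu_1^2/\mu_0$ that appear inside the $M_n$. For the surviving term $M_1=-\langle v_0,(|f'|^2-1)v_0\rangle/\|v_0\|^2=-2\sum_{j\ge1}\tfrac{(2z)_j^2}{j!^{\,2}}\sum_{m\ge2}\tfrac{(-1)^mz^{m+1}}{2\,j^{m+1}}P_m(\lambda)$, the crucial computation is that the linear coefficient satisfies $P_m'(0)=\tfrac{m(m-1)}{4}$. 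This follows from the explicit recursion for the triple $(\alpha_m,\beta_m,\gamma_m)$ in $(D{+}1)^mJ_0^2=\alpha_m(J_0')^2+\beta_mJ_0J_0'+\gamma_mJ_0^2$, namely $\alpha_{m+1}=\tfrac a2(\alpha_m'+\beta_m)$, $\beta_{m+1}=-a\alpha_m+\tfrac a2\beta_m'+\tfrac12\beta_m+a\gamma_m$, $\gamma_{m+1}=-\tfrac a2\beta_m+\tfrac a2\gamma_m'+\gamma_m$: comparing lowest-order coefficients gives $\gamma_m\equiv1$, then $\beta_m\equiv ma$ modulo $a^3$, and finally $\alpha_m\equiv\tfrac{m(m-1)}{4}\,a^2$ modulo $a^4$. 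Since $\sum_{m\ge2}\tfrac{(-1)^mz^{m+1}}{j^{m+1}}\tfrac{m(m-1)}{4}=\tfrac12\tfrac{z^3}{(z+j)^3}$, the linear-in-$\lambda$ part of $M_1$ equals $-\tfrac\lambda2\sum_{j\ge1}\tfrac{(2z)_j^2z^3}{j!^{\,2}(z+j)^3}$.

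Combining the last two paragraphs gives $\sum_{n\ge0}(C_n(0)+C_n'(0)\lambda)z^n=K(z)\bigl(1-\tfrac\lambda2\sum_{n\ge1}\tfrac{(2z)_n^2z^3}{n!^{\,2}(z+n)^3}\bigr)$ with $K(z)$ as above, which is \eqref{eq:constterm}. The step I expect to be the main obstacle is the divisibility claim in the preceding paragraph: one must verify carefully that none of the higher corrections $M_n$, $n\ge2$, descends to linear order in $\lambda$, and the delicate case is the contribution of the non-radial Fourier modes on $\DD$, whose matrix elements involve Bessel functions $J_{k'N}$ of large order and require a uniform estimate on the resolvent $(-\Delta-\lambda)^{-1}$ restricted to the angular mode $k'N$. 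Everything else is bookkeeping together with the classical Dixon identity and the linear-algebra computation of $P_m'(0)$.
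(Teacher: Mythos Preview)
Your approach is genuinely different from the paper's, and the gap you flag at the end is real and not merely ``bookkeeping''.

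The paper does \emph{not} use Rayleigh--Schr\"odinger perturbation theory. Instead it takes the Vekua representation~\eqref{eq:vekua} from the proof of Theorem~\ref{thm:mainthm}, writes $V=V^{(0)}+\lambda V^{(1)}+O(\lambda^2)$ and $\kappa=\kappa^{(0)}+\lambda\kappa^{(1)}+O(\lambda^2)$, and reduces the boundary condition $\psi|_{|z|=1}=0$ modulo $\lambda^2$. At order $\lambda^0$ the condition reads $\tfrac12\kappa^{(0)}+\log|F_N(z)|=N^{-1}\re\int_0^z V^{(0)}(t)\tfrac{dt}{t}$, which forces $\kappa^{(0)}=0$ and $N^{-1}\int_0^z V^{(0)}\tfrac{dt}{t}=\log F_N(z)$. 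At order $\lambda^1$, after using $V^{(0)}=NtF_N'/F_N$, one gets a closed integral expression for $\kappa^{(1)}$ over the unit circle, which the paper then rewrites (via the integral representation of $F_N$ and one change of order of integration) as the hypergeometric sum $-\tfrac1{2}\sum_{n\ge1}\frac{(2z)_n^2z^3}{n!^2(z+n)^3}$. No spectral resolvent, no sum over other disk eigenvalues, no estimates on $J_{kN}$ enter anywhere.

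Your route, by contrast, hinges on two claims that are not established. First, that the higher Rayleigh--Schr\"odinger corrections $M_n$ are \emph{polynomials} in $\lambda$ at all: for $n\ge 2$ the standard formula for $M_n$ is a sum over the full Dirichlet spectrum of $\DD$, with denominators $\lambda-\mu_j$, and there is no a priori reason this should collapse to a polynomial in $\lambda$ (that it does is essentially the content of Theorem~\ref{thm:mainthm}, proved via Vekua's parametrization, not via perturbation theory). Until this is shown, ``$M_n|_{\lambda=0}$'' and ``$M_n\bmod\lambda^2$'' are not even well-defined. Second, the heuristic ``each factor of $|f'|^2-1$ against a boundary-vanishing eigenfunction contributes a factor $a$'' gives at best $M_n=O(\lambda)$, not $O(\lambda^2)$: in $M_2\sim\langle v_0,WRWv_0\rangle$ the two boundary zeros come from the two outer $v_0$'s, producing a single factor of $a^2=\lambda$, while the intermediate resolvent output, though vanishing on $\partial\DD$, has slope governed by $\sqrt{\mu_j}$ rather than by $a$. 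So the divisibility claim you identify as ``the main obstacle'' is not a technicality but the heart of the matter, and your sketch does not supply it.

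Your computations of $K(z)$ via Dixon and of $P_m'(0)=\tfrac{m(m-1)}{4}$ are correct and do recover the right series for $M_1$ at linear order; but without closing the gap above they do not yield~\eqref{eq:constterm}. The paper's method sidesteps the issue entirely because Vekua's representation encodes the eigenfunction through a single holomorphic datum $V$, so reducing modulo $\lambda^2$ never introduces the rest of the disk spectrum.
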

Using this formula we also prove the following.
\begin{theorem} \label{thm:oddzeta}
	The coefficients $C_n(0)$ and $C_n'(0)$ are polynomials with rational coefficients in odd zeta values of homogeneous weight~$n$.
\end{theorem}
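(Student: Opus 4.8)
\medskip
I would start from the generating series identity of Theorem~\ref{thm:mainthm2}. Put $G(z)=\Gamma(1+z)^2\Gamma(1-2z)\big/\big(\Gamma(1-z)^2\Gamma(1+2z)\big)$ and $H(z)=\sum_{m\ge1}\frac{(2z)_m^2 z^3}{m!^2(z+m)^3}$; then that identity reads $\sum_{n\ge0}C_n(0)z^n=G(z)$ and $\sum_{n\ge0}C_n'(0)z^n=-\tfrac12 G(z)H(z)$. Let $\mathfrak{O}\subset\RR$ denote the $\QQ$-subalgebra generated by the odd zeta values, and let $\mathfrak{O}_n\subseteq\mathfrak{O}$ be the $\QQ$-span of the products $\zeta(2k_1{+}1)\cdots\zeta(2k_r{+}1)$ of total weight $n$. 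Since $\mathfrak{O}_a\cdot\mathfrak{O}_b\subseteq\mathfrak{O}_{a+b}$, it is enough to show that the coefficient $[z^n]G(z)$ lies in $\mathfrak{O}_n$ and that $[z^n]H(z)\in\mathfrak{O}_n$, for all $n$: the assertion about $C_n'(0)$ then follows by multiplying the two series, and the assertion about $C_n(0)$ is the first of the two.

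The statement for $G(z)$ is an elementary computation. Using $\log\Gamma(1+x)=-\gamma x+\sum_{k\ge2}\frac{(-1)^k\zeta(k)}{k}x^k$ together with
\[
\log G(z)=2\log\Gamma(1+z)-2\log\Gamma(1-z)+\log\Gamma(1-2z)-\log\Gamma(1+2z),
\]
one checks that the coefficient of $z^1$ (the one carrying $\gamma$) and all coefficients of even degree vanish --- this reflects the fact that the multiset $\{1,1,-2\}$ of dilations in the numerator of $G$ is the negative of the multiset $\{-1,-1,2\}$ in the denominator --- so that $\log G(z)=\sum_{k\ge3,\ k\ \mathrm{odd}}\frac{2(2^k-2)}{k}\zeta(k)z^k$. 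Exponentiating gives $G(z)\in\mathfrak{O}[[z]]$ with $[z^n]G(z)\in\mathfrak{O}_n$, which settles the claim for $C_n(0)$. (It is worth noting that $G(z)$ is the Dixon evaluation of ${}_3F_2(2z,2z,z;1,1+z;1)=\sum_{m\ge0}\frac{(2z)_m^2}{m!^2}\frac{z}{z+m}$; from this point of view $H(z)=z^3\sum_{m\ge1}\frac{(2z)_m^2}{m!^2}\frac{1}{(z+m)^3}$ is the ``cubed'' analogue, a well-poised ${}_5F_4(2z,2z,z,z,z;1,z{+}1,z{+}1,z{+}1;1)$ minus $1$. This ${}_5F_4$ is however not very-well-poised, so there is no Gamma-function closed form for $H$, and it must be treated separately.)

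For $H(z)$ I would expand in powers of $z$. From $\frac{(2z)_m^2}{m!^2}=\frac{4z^2}{m^2}\exp\!\big(2\sum_{k\ge1}\frac{(-1)^{k-1}2^k}{k}H_{m-1}^{(k)}z^k\big)$, where $H_{m-1}^{(k)}=\sum_{j=1}^{m-1}j^{-k}$, and from $\frac{z^3}{(z+m)^3}=\frac{z^3}{m^3}\sum_{\ell\ge0}\binom{\ell+2}{2}(-z/m)^\ell$, one sees that $[z^n]H(z)$ is a finite $\QQ$-linear combination of Euler sums $\sum_{m\ge1}\frac{H_{m-1}^{(k_1)}\cdots H_{m-1}^{(k_r)}}{m^a}$ with $a\ge5$ and $a+k_1+\dots+k_r=n$. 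By the quasi-shuffle (stuffle) relations each such sum is a $\ZZ$-linear combination of multiple zeta values of weight $n$, so $[z^n]H(z)\in\MZV_n$. The remaining --- and essential --- point is that this particular combination in fact lies in $\mathfrak{O}_n$, i.e.\ that all contributions involving $\pi$ (even zeta values and the higher-weight irreducible multiple zeta values) cancel. That such cancellation genuinely occurs and is not visible termwise can already be seen at $n=6$: the ingredient $\sum_{m\ge1}H_{m-1}/m^5=\tfrac52\zeta(6)-\zeta(2)\zeta(4)-\tfrac12\zeta(3)^2$ involves $\pi$, yet after using $\zeta(2)\zeta(4)=\tfrac74\zeta(6)$ one gets $[z^6]H(z)=-8\zeta(3)^2\in\mathfrak{O}_6$.

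I expect proving this cancellation in general to be the main obstacle, and I see two plausible routes. The first is hypergeometric: one has the integral representation $H(z)=\tfrac{z^3}{2}\int_0^1 t^{z-1}(\log t)^2\big({}_2F_1(2z,2z;1;t)-1\big)\dt$, and after Euler's transformation ${}_2F_1(2z,2z;1;t)=(1-t)^{1-4z}\,{}_2F_1(1-2z,1-2z;1;t)$ the $z$-expansion of $H$ is governed by iterated integrals whose ``words'' are constrained by the well-poised structure of the ${}_5F_4$ above; the goal would be to show, in the generality needed, that this structure forces the expansion into $\mathfrak{O}$. A cleaner variant, if it can be completed, is to Taylor-expand the Dixon-summable family ${}_3F_2(2z,2z,w;1,1+2z-w;1)=\frac{\Gamma(1+z)\Gamma(1+2z-w)\Gamma(1-z-w)}{\Gamma(1+2z)\Gamma(1-z)\Gamma(1+z-w)\Gamma(1-w)}$ (which specializes to $G(z)$ at $w=z$), together with Dougall-summable very-well-poised ${}_5F_4$'s, around the relevant parameter values, so as to express $H(z)$ itself through Gamma functions of $z$ --- whence $[z^n]H(z)\in\mathfrak{O}_n$ by the same mechanism as for $G$; the obstruction here is that a single such identity controls only part of the harmonic-sum data, so one would need the full web of contiguous relations to pin down $H$. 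The second route is recursive: the proof of Theorem~\ref{thm:mainthm} yields an explicit recursion for the polynomials $C_n(\lambda)$, hence for $C_n(0)$ and $C_n'(0)$, and one would prove $C_n(0),C_n'(0)\in\mathfrak{O}_n$ by induction on $n$, the crux again being to verify that the inhomogeneous terms the recursion produces remain in $\mathfrak{O}$.
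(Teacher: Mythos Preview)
Your treatment of $C_n(0)$ is correct and coincides with the paper's: both simply expand $\log G(z)$ via the $\log\Gamma$ series and observe that only odd zeta values survive.

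For $C_n'(0)$, however, your argument has a genuine gap, which you yourself flag: you reduce the claim to $[z^n]H(z)\in\mathfrak{O}_n$, correctly note that $[z^n]H(z)$ lies in $\MZV_n$ a priori, and then say that the cancellation of even zeta values and irreducible MZVs ``is the main obstacle'' --- but you do not actually carry out either of your two proposed routes. The sketch via Euler sums, the integral representation with Euler's transformation, and the contiguous-relation idea are all plausible heuristics, but none of them is developed to the point of a proof.

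The paper closes this gap by exactly the mechanism you describe as the ``cleaner variant'': it produces a closed form for $1+H(z)$ in terms of $\Gamma$ and the trigamma function. The key identity (their equation for $\sum_{n\ge0}\frac{(2z)_n^2 z^3}{n!^2(z+n)^3}$) is
\[
1+H(z)\;=\;G(z)\Big(1+z^2\psi^{(1)}(1+z)-z^2\psi^{(1)}(1-z)\Big),
\]
where $\psi^{(1)}=(\log\Gamma)''$. Since
\[
z^2\psi^{(1)}(1-z)-z^2\psi^{(1)}(1+z)\;=\;\sum_{k\ge1}4k\,\zeta(2k+1)\,z^{2k+1}
\]
(obtained by differentiating the $\log\Gamma$ series twice), the factor in parentheses lies in $\mathfrak{O}[[z]]$ with the correct weight grading, and hence so does $G(z)H(z)=G(z)^2\big(1+z^2\psi^{(1)}(1+z)-z^2\psi^{(1)}(1-z)\big)-G(z)$. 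This immediately gives $C_n'(0)\in\mathfrak{O}_n$. The identity itself is not one of the standard Dixon/Dougall summations (as you suspected, the ${}_5F_4$ is well-poised but not very-well-poised), and the paper proves it by first establishing a terminating analogue with an extra integer parameter $m$ --- the left side acquiring factors $\frac{(1+m)_n(-m)_n}{(1+2z+m)_n(2z-m)_n}$ and the right side becoming a finite rational expression --- via the Wilf--Zeilberger method, and then letting $m\to\infty$. So your intuition that a single hypergeometric closed form suffices was in fact correct; what was missing was the specific identity and its proof.
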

The proof is based on a hypergeometric identity~\eqref{eq:trigamma} 
of Ramanujan-Dougall type that could be of independent interest.
Theorem~\ref{thm:oddzeta} confirms Conjecture~\ref{conj:singlevalued} 
for the first two coefficients $C_n(0)$ and $C_n'(0)$. 
Note that the expression for $C_{11}$ from Table~\ref{tab:cn} shows that 
$C_{11}''(0)$ involves $\Z_{3,5,3}^{\mathrm{sv}}$, and 
thus the claim of Theorem~\ref{thm:oddzeta} in general fails for $C_{n}''(0)$, 
assuming the (widely believed) algebraic independence of 
$\Z_{3,5,3}^{\mathrm{sv}}$ and $\zeta(2m+1)$, $m\ge 1$ (see~\cite[p.~35]{Br2}).

As a final remark, we note that the normalizing factor
	\[\frac{\Gamma(1+z)^2\Gamma(1-2z)}{\Gamma(1-z)^2\Gamma(1+2z)}\]
that appears in several of our formulas is a specialization of Virasoro's
closed bosonic string amplitude~\cite{Vi}. We do not know if this is a simple
coincidence, or if there is some conceptual explanation for this.

\smallskip
\section{Multiple polylogarithms and multiple zeta values}
In this section we very briefly recall some basic properties of multiple polylogarithms and multiple zeta values. For a much more detailed introduction (including the algebraic structure and interpretation of MZVs as periods of mixed Tate motives) we refer the reader to~\cite{W},~\cite{BF}.

Let $m_1,\dots,m_r$ be positive integers.
The one-variable multiple polylogarithm $\Li_{m_1,\dots,m_r}(z)$ is an analytic 
function defined by the power series
	\begin{equation} \label{eq:limdef}
	\Li_{m_1,\dots,m_r}(z) \es{\coloneqq} \sum_{0<n_1<n_2<\dots<n_r}
	\frac{z^{n_r}}{n_1^{m_1}n_2^{m_2}\dots n_r^{m_r}}\,,\qquad |z|<1\,.
	\end{equation}
For $m_r>1$ the above series converges absolutely for $|z|\le 1$ and we define
	\begin{equation} \label{eq:mzvdef}
	\zeta(m_1,\dots,m_r) \es{\coloneqq} \sum_{0<n_1<n_2<\dots<n_r}
	\frac{1}{n_1^{m_1}n_2^{m_2}\dots n_r^{m_r}} \= \Li_{m_1,\dots,m_r}(1)\,.
	\end{equation}

The numbers $\zeta(m_1,\dots,m_r)$ are called multiple zeta values (MZVs).
We denote by $\MZV$ the $\QQ$-linear span of all multiple zeta values
and by $\MZV_k$ the $\QQ$-linear span of all multiple zeta values of weight $k$,
i.e., the linear span of $\zeta(m_1,\dots,m_r)$ over all $r$-tuples 
$(m_1,\dots,m_r)$ satisfying $m_1+\dots+m_r=k$. 
The $\QQ$-vector space $\MZV$ forms an algebra (see~\eqref{eq:shuffle} below),
and multiplication respects weight, i.e., 
$\MZV_k \cdot \MZV_l \subseteq \MZV_{k+l}$.
Zagier~\cite{Za} has conjectured that there are no rational linear relations 
between elements of $\MZV_k$ for different $k$ 
(that is, that $\MZV=\bigoplus_{k\ge0} \MZV_k$) 
and that $\dim \MZV_k=d_k$, where $d_k$ are defined by the generating series
\[\frac{1}{1-x-x^3} \= \sum_{k\ge0} d_kx^k\,.\]
The upper bound $\dim \MZV_k \le d_k$ has been proved independently by 
Goncharov and Terasoma, but no nontrivial lower bounds for $\dim \MZV_k$ are presently known.

For our purposes it is more convenient to index multiple polylogarithms 
by words in two letters $X=\{x_0,x_1\}$, reflecting their
structure as iterated integrals as opposed to the definition as an infinite 
sum~\eqref{eq:limdef}. In our treatement we mainly follow Brown~\cite{Br1} (see also~\cite{NPH}).
Let $X^{\times}$ be the free noncommutative monoid generated by~$X$, 
i.e., the set of all words in $x_0,x_1$ equipped with the concatenation 
product. Then $\{\Li_{w}\}_w$ is a family of analytic functions on 
the cut plane $\CC\sm ((-\infty,0]\cup [1,\infty))$ satisfying the recursive 
relations
\begin{equation} \label{eq:polylogrec}
\frac{d}{dz}\Li_{x_0w}(z)\= \frac{\Li_{w}(z)}{z}\,,
\qquad
\frac{d}{dz}\Li_{x_1w}(z)\= \frac{\Li_{w}(z)}{1-z}\,,
\qquad w\in X^{\times}
\end{equation}
together with the following initial conditions: $\Li_{e}(z)=1$, 
$\Li_{x_0^n}(z)=\frac{1}{n!}\log^n(z)$, and $\lim_{z\to 0}\Li_{w}(z)=0$ 
for all $w\in X^{\times}$ not of the form $x_0^n$. 
Here $e\in X^{\times}$ denotes the empty word.
These conditions uniquely determine $\Li_{w}(z)$ and for 
$m_1,\dots,m_r\ge 1$ one has
\begin{equation} \label{eq:polylogrel}
\Li_{x_0^{m_r-1}x_1\dots x_{0}^{m_1-1}x_1}(z) 
\= \Li_{m_1,\dots,m_r}(z)\,.
\end{equation}
We also extend the notation $\Li_w(z)$ by linearity to the elements of the 
monoid ring $\CC\langle X\rangle$, i.e., for any formal combination 
$\sum_{i} a_iw_i$ we set $\Li_{\sum_{i}a_iw_i}(z) = \sum_{i}a_i\Li_{w_i}(z)$.
Note that the algebra $\CC\ang{X}$ is graded by word 
length, and we denote by $\CC\ang{X}_n$ the $n$-the graded piece.
We will also write $|w|$ for the length of $w\in X^{\times}$ and we will
say that the function $\Li_{w}(z)$ has weight $|w|$.
(Since the functions $\Li_w(z)$ are linearly independent over $\CC(z)$, see~\cite{Br1}, this notion of weight is well-defined.)

An important property of the space of multiple polylogarithms is that
it is closed under mutliplication. More precisely, one has
\begin{equation} \label{eq:shuffle}
\Li_{w}(z)\Li_{w'}(z) \= \Li_{w\shuffle w'}(z)\,.
\end{equation}
Here $\shuffle\colon\CC\ang{X}\times\CC\ang{X}\to\CC\ang{X}$ 
denotes the shuffle product that is defined on words by
\[a_1\dots a_k \shuffle a_{k+1}\dots a_{k+l}
\= \sum_{\sigma} a_{\sigma(1)}\dots a_{\sigma(k+l)}\,,\]
where $\sigma$ runs over all permutations satisfying 
$\sigma^{-1}(1)<\dots<\sigma^{-1}(k)$
and $\sigma^{-1}(k+1)<\dots<\sigma^{-1}(k+l)$.

Note that for $w\in X^{\times}x_1$ the function $\Li_w(z)$ extends analytically 
to $\CC\sm [1,\infty)$, and for $w\in x_0X^{\times}x_1$ it is moreover 
continuous on $\ol{\DD}$. We will call the words $w\in x_0X^{\times}x_1$ 
\textit{convergent} and we will also call convergent any formal linear 
combination of convergent words in $\CC\ang{X}$ (in other words, all elements 
of $x_0\CC\ang{X}x_1$ are convergent). An important corollary 
of~\eqref{eq:shuffle}, is that for any $w\in X^{\times}x_1$ there exists a 
unique collection of convergent elements 
$w_0,w_1,\dots,w_k\in \QQ\langle X\rangle$ such that
\[\Li_w(z) \= 
\Li_{w_0}(z)+\Li_{w_1}(z)\Li_1(z)+\dots+\Li_{w_k}(z)\Li_1^k(z)\,.\]
Recall that $\Li_1(z)=-\log(1-z)$. This allows one to define the multiple 
zeta value $\zeta(w)=\Li_{w}(1)$ in cases when the series diverges by setting 
$\Li_{w}(1)\coloneqq\Li_{w_0}(1)$ for all $w\in X^{\times}x_1$. 

First, let us note the following simple corollaries of the definition of $\Li_w$. 
\begin{lemma} \label{lem:convolution}
	For all $w\in X^{\times}x_1$ and $k\ge0$ we have
	\[\Li_{x_0^{k+1}w}(z) \= \frac{1}{k!}\int_{0}^{z}\Li_w(t)\log^k(z/t)\frac{dt}{t}\,, 
	\qquad z\in \ol{\DD}\,.\]
\end{lemma}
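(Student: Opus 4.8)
The plan is to prove the identity by induction on $k$, using the recursive relations~\eqref{eq:polylogrec} that characterize the functions $\Li_w(z)$. First I would establish the base case $k=0$: by~\eqref{eq:polylogrec} we have $\frac{d}{dz}\Li_{x_0w}(z)=\Li_w(z)/z$, and since $w\in X^{\times}x_1$ is not of the form $x_0^n$, the initial condition gives $\lim_{z\to0}\Li_{x_0w}(z)=0$. Integrating from $0$ to $z$ (the integrand $\Li_w(t)/t$ is integrable near $0$ because $\Li_w(t)=O(t)$ for convergent-type tails, or more precisely $\Li_w$ vanishes at $0$) yields $\Li_{x_0w}(z)=\int_0^z \Li_w(t)\,\frac{dt}{t}$, which is exactly the claimed formula with $k=0$ since $\frac{1}{0!}\log^0(z/t)=1$.

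For the inductive step, assume the formula holds for $k-1$, i.e.
\[
\Li_{x_0^{k}w}(z)\=\frac{1}{(k-1)!}\int_0^z \Li_w(t)\log^{k-1}(z/t)\frac{dt}{t}\,.
\]
I want to show the analogous statement for $x_0^{k+1}w=x_0(x_0^kw)$. By~\eqref{eq:polylogrec} again, $\frac{d}{dz}\Li_{x_0^{k+1}w}(z)=\Li_{x_0^kw}(z)/z$, and $\Li_{x_0^{k+1}w}(0)=0$ since $x_0^{k+1}w$ is still not a pure power of $x_0$. Hence
\[
\Li_{x_0^{k+1}w}(z)\=\int_0^z \frac{\Li_{x_0^kw}(s)}{s}\,ds
\=\int_0^z\frac{ds}{s}\,\frac{1}{(k-1)!}\int_0^s \Li_w(t)\log^{k-1}(s/t)\frac{dt}{t}\,,
\]
using the inductive hypothesis. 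Now I would swap the order of integration over the region $0<t<s<z$, which gives
\[
\Li_{x_0^{k+1}w}(z)\=\frac{1}{(k-1)!}\int_0^z \Li_w(t)\frac{dt}{t}\int_t^z \log^{k-1}(s/t)\,\frac{ds}{s}\,.
\]
The inner integral is elementary: substituting $u=\log(s/t)$ turns it into $\int_0^{\log(z/t)} u^{k-1}\,du=\frac{1}{k}\log^k(z/t)$. Substituting back produces the factor $\frac{1}{k!}\log^k(z/t)$, completing the induction.

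The only genuine point requiring care — and the place I would expect to spend the most attention — is justifying the manipulations (Fubini and the convergence of the iterated integrals) near the singularity at $t=0$. The weight $\log^k(z/t)$ blows up there, but it does so only logarithmically, while $\Li_w(t)/t$ is bounded near $0$ when $w$ starts with $x_0$; however the lemma as stated allows general $w\in X^{\times}x_1$, including $w$ beginning with $x_1$, where $\Li_w(t)=O(t)$ still holds (every word in $X^{\times}x_1$ that is not a pure $x_0$-power has $\Li_w$ vanishing at $0$, and in fact $\Li_w(t)/t$ extends continuously to $t=0$ unless $w$ itself is a power of $x_0$ times $x_1$ with... — in all relevant cases $\Li_w(t)=O(t\log^{|w|}t)$). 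Since $t\log^a t\to 0$, the integrand $\Li_w(t)\log^k(z/t)/t$ is absolutely integrable on $[0,z]$, so Fubini applies and all the iterated integrals converge. I would phrase this uniformly by noting that for $w\in x_0X^\times x_1$ (convergent $w$) one has a clean bound $|\Li_w(t)|\le C t$, and for general $w\in X^\times x_1$ one reduces to this case via the shuffle decomposition $\Li_w=\sum_j \Li_{w_j}\Li_1^j$ together with $|\Li_1(t)|=|\log(1-t)|\le C$ on $\ol{\DD}$ away from $t=1$, and checks the endpoint $z\in\dl\DD$ separately by continuity.
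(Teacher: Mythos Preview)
Your proof is correct and follows exactly the approach the paper has in mind: induction on $k$ using the recursion~\eqref{eq:polylogrec}, with the inductive step carried out by iterating the integral and evaluating $\int_t^z\log^{k-1}(s/t)\,\frac{ds}{s}=\tfrac{1}{k}\log^k(z/t)$. The paper compresses all of this into ``trivially from~\eqref{eq:polylogrec} by induction on $k$''; your extra discussion of integrability near $t=0$ (which reduces to $\Li_w(t)=O(t)$ for $w\in X^{\times}x_1$) is more than the paper bothers to record but is the right justification.
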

\begin{proof}
	This follows trivially from~\eqref{eq:polylogrec} by induction on $k$.
\end{proof}

\begin{lemma} \label{lem:average}
	For all $w\in X^{\times}x_1$ we have
	\[\int_{0}^{1}\Li_{w}(e^{2\pi i t})dt \= 0\,.\]
\end{lemma}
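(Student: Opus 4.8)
The plan is to read $\int_0^1\Li_w(e^{2\pi it})\,dt$ as the mean value of $\Li_w$ over the unit circle and to evaluate it by combining the mean value property of holomorphic functions on $\DD$ with a controlled passage to the boundary circle.

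First I would record the properties of $\Li_w$ that are needed, for $w\in X^{\times}x_1$. Such a word ends in $x_1$, hence is not of the form $x_0^n$, so by the stated initial conditions $\lim_{z\to0}\Li_w(z)=0$; since $\DD\subset\CC\sm[1,\infty)$, the function $\Li_w$ is holomorphic on $\DD$ with $\Li_w(0)=0$, and it extends continuously to $\overline{\DD}\sm\{1\}$. To control $\Li_w$ near the singular point $z=1$ I would use the decomposition recalled above, $\Li_w(z)=\sum_{j=0}^{k}\Li_{w_j}(z)\Li_1(z)^j$ with $w_0,\dots,w_k\in x_0\CC\ang{X}x_1$ convergent; each $\Li_{w_j}$ is then continuous, hence bounded, on $\overline{\DD}$, and since $\Li_1(z)=-\log(1-z)$ this gives an estimate $|\Li_w(z)|\le C\bigl(1+|\log|1-z||\bigr)^{k}$ valid on all of $\overline{\DD}$.

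For $0<r<1$ the mean value property — equivalently, termwise integration of the Taylor expansion of $\Li_w$ at the origin, which has vanishing constant term — gives
\[\int_0^1\Li_w(re^{2\pi it})\,dt \= \Li_w(0) \= 0\,.\]
It then remains to let $r\to1^{-}$ under the integral sign. Combining the bound above with the elementary inequality $|1-re^{2\pi it}|\ge 2\sqrt{2}\,\min(t,1-t)$ (valid for $r\ge\tfrac12$ and $t\in[0,1]$), one produces a single integrable majorant $g(t)=C\bigl(1+|\log\min(t,1-t)|\bigr)^{k}\in L^1[0,1]$ with $|\Li_w(re^{2\pi it})|\le g(t)$ uniformly for $r\in[\tfrac12,1)$. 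Dominated convergence, together with the continuity of $\Li_w$ on $\overline{\DD}\sm\{1\}$, then yields
\[\int_0^1\Li_w(e^{2\pi it})\,dt \= \lim_{r\to1^{-}}\int_0^1\Li_w(re^{2\pi it})\,dt \= 0\,.\]

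The one place that needs genuine care — and the step I expect to carry the real content — is this passage to the boundary: $\Li_w$ does blow up (only logarithmically, but it does) at $z=1$, so one cannot simply set $r=1$ in the mean value property, and the interchange of limit and integral must be justified. Once the logarithmic growth estimate near $z=1$ is in hand this becomes routine, and the remainder of the argument is just bookkeeping with the analytic properties of $\Li_w$ recalled in this section.
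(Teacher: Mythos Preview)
Your argument is correct, but it takes a different and longer route than the paper. The paper's proof is a single line: from the recursion $\tfrac{d}{dz}\Li_{x_0w}(z)=\Li_w(z)/z$ one gets $\Li_w(e^{2\pi it})=\tfrac{1}{2\pi i}\tfrac{d}{dt}\Li_{x_0w}(e^{2\pi it})$, and since $w\in X^{\times}x_1$ forces $x_0w\in x_0X^{\times}x_1$ to be convergent, $\Li_{x_0w}$ is continuous on $\overline{\DD}$ and the integral of the derivative over a full period vanishes. In other words, the paper exploits the fact that a \emph{continuous} antiderivative on $\overline{\DD}$ is handed to you for free by the polylogarithm recursion, so the boundary passage that you work hard to justify never arises. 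Your approach---mean value property on $|z|=r<1$ followed by dominated convergence with a logarithmic majorant coming from the decomposition into $\Li_1^j$ times convergent pieces---is a perfectly sound classical complex-analysis argument and has the virtue of not invoking the recursive structure, but here it is doing by hand what the antiderivative does in one stroke.
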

\begin{proof}
	This follows from $\Li_{w}(e^{2\pi it}) \= \frac{1}{2\pi i}\frac{d}{dt}\Li_{x_0w}(e^{2\pi i t})$.
\end{proof}

Our proof of Theorem~\ref{thm:mainthm} is based on the following simple result.
\begin{proposition} \label{prop:polylogproduct}
	Let $u,v\in X^{\times}x_1$ and let $k=|u|+|v|$. Then there 
	exist elements $\alpha(u,v)$ and $\beta(u,v)$ in $\bigoplus_{j=0}^{k-1}\MZV_{j}\otimes \QQ\ang{X}_{k-j-1}x_1$
	and $A_{u,v}\in\MZV_k$ such that 
	\begin{equation} \label{eq:polylogproduct}
	\Li_{u}(z)\Li_{v}(z^{-1}) \= A_{u,v}
	+\Li_{\alpha(u,v)}(z)+\Li_{\beta(u,v)}(z^{-1})
	\,,\qquad z\in \CC\sm [0,+\infty)\,.
	\end{equation}
	Moreover, if $u$ and $v$ are convergent, then 
	$\alpha(u,v)$ and $\beta(u,v)$ are also convergent.
\end{proposition}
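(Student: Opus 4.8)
The plan is to argue by induction on $k=|u|+|v|$, peeling off one letter at a time using the differential recursion~\eqref{eq:polylogrec} and integrating. The key observation is that if we set $F_{u,v}(z)\coloneqq\Li_u(z)\Li_v(z^{-1})$, then $F_{u,v}$ satisfies a first-order ODE whose inhomogeneous term involves products $\Li_{u'}(z)\Li_{v}(z^{-1})$ and $\Li_u(z)\Li_{v'}(z^{-1})$ with $u',v'$ shorter words, to which the inductive hypothesis applies. Concretely, write $u=x_\epsilon u'$ with $\epsilon\in\{0,1\}$; then $\frac{d}{dz}\Li_u(z)=\Li_{u'}(z)/z$ if $\epsilon=0$ and $=\Li_{u'}(z)/(1-z)$ if $\epsilon=1$. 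Similarly, differentiating $\Li_v(z^{-1})$ in $z$ pulls out a factor $-z^{-2}$ from the chain rule, giving $\frac{d}{dz}\Li_v(z^{-1})$ equal to $-z^{-1}\Li_{v'}(z^{-1})$ or $-\frac{1}{z^2(1-z^{-1})}\Li_{v'}(z^{-1})=-\frac{1}{z(z-1)}\Li_{v'}(z^{-1})$ according to whether $v=x_0v'$ or $v=x_1v'$. So $\frac{d}{dz}F_{u,v}(z)$ is a sum of two terms, each a rational function of $z$ (with poles only at $0,1$) times an $F$ with a strictly shorter pair of words.

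First I would treat the base case: when $|u|=|v|=1$, necessarily $u=v=x_1$, and $\Li_{x_1}(z)=-\log(1-z)$, $\Li_{x_1}(z^{-1})=-\log(1-z^{-1})=-\log(1-z)+\log(-z)$ on $\CC\sm[0,\infty)$ (choosing the branch compatibly), so $F_{x_1,x_1}(z)=\log^2(1-z)-\log(1-z)\log(-z)$, which one rewrites in terms of $\Li_{x_1x_1}(z)$, $\Li_{x_1}(z)$, $\Li_{x_1}(z^{-1})$ using the shuffle relation $\Li_{x_1}^2=2\Li_{x_1x_1}$ and $\log(-z)=\log(z)+i\pi$ type identities; the constant $A_{x_1,x_1}$ that drops out is a weight-$2$ MZV (a rational multiple of $\zeta(2)$). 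For the inductive step, I would integrate the ODE for $F_{u,v}$ from a base point. Applying the inductive hypothesis to each $F$ appearing in $\frac{d}{dz}F_{u,v}$ turns the integrand into a rational function times $\bigl(A+\Li_{\alpha}(z)+\Li_{\beta}(z^{-1})\bigr)$; then I use Lemma~\ref{lem:convolution} (and its mirror version obtained by the substitution $t\mapsto t^{-1}$, which converts $\int\Li_w(t^{-1})\frac{dt}{t}$-type integrals into polylogarithms of $z^{-1}$) to express each piece again as $\Li_{(\text{word})}(z)$ or $\Li_{(\text{word})}(z^{-1})$, absorbing the constant of integration into $A_{u,v}$. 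The terms $\frac{1}{1-z}\Li_w(z)=\frac{d}{dz}\Li_{x_1w}(z)$ and $\frac{1}{z}\Li_w(z)=\frac{d}{dz}\Li_{x_0w}(z)$ integrate immediately; the ones involving $\Li_w(z^{-1})$ against $\frac{dz}{z}$ or $\frac{dz}{z-1}$ are handled by the change of variables $z\mapsto z^{-1}$, using $\frac{dz}{z}\mapsto -\frac{dz}{z}$ and $\frac{dz}{1-z}\mapsto\frac{dz}{z(z-1)}=\frac{dz}{z-1}-\frac{dz}{z}$. The weight bookkeeping is exactly right: each integration raises word-length by one, matching the drop from $k$ to $k-1$ in the inductive hypothesis, so $\alpha(u,v),\beta(u,v)\in\bigoplus_{j=0}^{k-1}\MZV_j\otimes\QQ\ang X_{k-j-1}x_1$ and $A_{u,v}\in\MZV_k$.

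The main obstacle — and the part that needs care rather than cleverness — is the \emph{branch-tracking and the evaluation of constants of integration}. On $\CC\sm[0,\infty)$ both $\Li_u(z)$ and $\Li_v(z^{-1})$ are single-valued, but the functions $\Li_w(z^{-1})$ that appear after the substitution $z\mapsto z^{-1}$ are a priori defined on $\CC\sm[1,\infty)$ (when $w$ ends in $x_1$) or need the extra $x_0$'s stripped off via the shuffle-regularization recalled before Lemma~\ref{lem:convolution}; one must check these are all continuous across the relevant region so the identity holds on all of $\CC\sm[0,\infty)$ and not just on a sub-sector. The constant $A_{u,v}$ is obtained by evaluating~\eqref{eq:polylogproduct} at a convenient point — e.g. letting $z\to1$ along the negative imaginary axis, or $z\to-1$ — where every $\Li_{(\text{word})x_1}$ converges to the corresponding (regularized) MZV; this shows $A_{u,v}\in\MZV_k$. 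Finally, the moreover-clause: if $u\in x_0X^\times x_1$ and $v\in x_0X^\times x_1$, then the ODE has no $\frac{1}{z}\Li_{u'}(z)$-type term with $u'$ still needing an $x_1$-prefix issue, and one checks inductively that the leading letters produced are always $x_0$ while the trailing letter stays $x_1$, so $\alpha(u,v)$ and $\beta(u,v)$ remain in $x_0\QQ\ang X x_1$, i.e. convergent. I would organize the induction so that this convergence statement is carried along as part of the inductive hypothesis rather than proved separately at the end.
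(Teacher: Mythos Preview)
Your overall strategy matches the paper's: differentiate $F_{u,v}(z)=\Li_u(z)\Li_v(z^{-1})$, apply the inductive hypothesis to the two shorter products that appear, and integrate back using the recursion~\eqref{eq:polylogrec} together with its analogue for $\Li_w(z^{-1})$. The paper takes the empty word as the base case (setting $\alpha(u,e)=u$, $\beta(e,v)=v$, etc.), which is cleaner than your explicit $u=v=x_1$ computation and is in any case forced on you once you peel a letter off a length-one word; but this is cosmetic.

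The genuine gap is in the ``moreover'' clause. Your assertion that ``one checks inductively that the leading letters produced are always $x_0$'' hides exactly the non-trivial step. When $u=x_0u'$ and $v=x_0v'$ are both convergent, the derivative of $F_{u,v}$ is $\tfrac{1}{z}F_{u',x_0v'}-\tfrac{1}{z}F_{x_0u',v'}$, and the constant terms from the inductive hypothesis contribute $\tfrac{1}{z}(A_{u',x_0v'}-A_{x_0u',v'})$. An antiderivative of $\tfrac{1}{z}$ in the allowed vocabulary is $\Li_{x_1}(z^{-1})-\Li_{x_1}(z)$ (since $\tfrac{d}{dz}\Li_{x_1}(z^{-1})=\tfrac{1}{z}+\tfrac{1}{1-z}$), so integrating produces a bare $x_1$ contribution $(A_{x_0u',v'}-A_{u',x_0v'})x_1$ in $\alpha$ and its negative in $\beta$. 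For $\alpha$ and $\beta$ to remain convergent this coefficient must vanish, i.e.\ one must prove
\[
A_{u',x_0v'} \;=\; A_{x_0u',v'}\,.
\]
This does not follow from the inductive bookkeeping; the paper proves it by writing $i\tfrac{d}{dt}F_{x_0u',x_0v'}(e^{it})$ as $A_{u',x_0v'}-A_{x_0u',v'}$ plus terms of the form $\Li_w(e^{\pm it})$ and integrating over the circle, using Lemma~\ref{lem:average} to kill the latter. You need to supply this (or an equivalent) argument; without it the convergence claim fails.
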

\begin{proof}
	We will prove the statement by induction on $k$ for all 
	$u,v\in X^{\times}x_1\sqcup \{e\}$.
	For the base of induction, when either $u=e$ or $v=e$ the identity
	becomes trivial if we set $A_{u,e}=A_{e,v}=0$, $\alpha(u,e)=u$,
	$\alpha(e,v)=0$, and $\beta(u,e)=0$, $\beta(e,v)=v$.
	
	Note that from~\eqref{eq:polylogrec} it follows that
	for all $w\in X^{\times}$ we have
	\begin{equation} \label{eq:polylogrec2}
	\frac{d}{dz}\Li_{x_0w}(z^{-1})\= -\tfrac{1}{z}\Li_{w}(z^{-1})\,,
	\qquad
	\frac{d}{dz}\Li_{x_1w}(z^{-1})\= (\tfrac{1}{z}+\tfrac{1}{1-z})\Li_{w}(z^{-1})\,.
	\end{equation}
	Therefore, if we set
	\[F_{u,v}(z) \= \Li_{u}(z)\Li_{v}(z^{-1})\,,\]
	then for any $a,b\in X$, $u,v\in X^{\times}$ we have
	\begin{align*}
	&\frac{d}{dz}F_{au,bv}(z) 
	\= \phi_a(z)F_{u,bv}(z)+\psi_b(z)F_{au,v}(z) \\
	&\= \phi_a(z)
	(A_{u,bv}+\Li_{\alpha(u,bv)}(z)+\Li_{\beta(u,bv)}(\tfrac{1}{z}))
	+\psi_b(z)(A_{au,v}+\Li_{\alpha(au,v)}(z)+\Li_{\beta(au,v)}(\tfrac{1}{z}))
	\,,
	\end{align*}
	where $\phi_{x_0}(z)=\frac{1}{z}$, $\phi_{x_1}(z)=\frac{1}{1-z}$,
	$\psi_{x_0}(z)=-\frac{1}{z}$, and $\psi_{x_1}(z)=\frac{1}{z}+\frac{1}{1-z}$.
	In view of this we recursively define
	\begin{equation} \label{eq:alphabetarecursion}
	\begin{split}
	\alpha(au,bv) &\= a\alpha(u,bv)+\tilde{b}\alpha(au,v)
	+(A_{u,bv}\delta(a)+A_{au,v}\tfrac{1-\delta(b)}{2})x_1 \\
	\beta(au,bv) &\= \tilde{a}\beta(u,bv)+b\beta(au,v)
	+(A_{au,v}\delta(b)+A_{u,bv}\tfrac{1-\delta(a)}{2})x_1\,,
	\end{split}
	\end{equation}
	where $\widetilde{x_0}=-x_0$, $\widetilde{x_1}=x_0+x_1$, and
	$\delta$ is defined by $\delta(x_1)=1$, $\delta(x_0)=-1$.
	Then by induction we obtain that
	\[\Li_{u}(z)\Li_{v}(z^{-1})-\Li_{\alpha(u,v)}(z)-\Li_{\beta(u,v)}(z^{-1})
	\= {\rm const} \es{=:} A_{u,v}\,.\]
	
	If $u,v,\alpha$, and $\beta$ are all convergent, then we may simply take
	$z=1$ to get 
	$A_{u,v}=\Li_u(1)\Li_v(1)-\Li_{\alpha(u,v)}(1)-\Li_{\beta(u,v)}(1)$. 
	Otherwise we take $z=e^{2\pi i x}$ and take the limit 
	$x\to 0+$, which corresponds to a regularization of $\Li_w(1)$ given by
	\[\Li_w(e^{\pm 2\pi i 0}) \coloneqq \sum_{j=0}^{k}\Li_{w_j}(1)(\pm\tfrac{\pi i}{2})^j\,,\]
	where $\Li_w(z) = \sum_{j=0}^{k}\Li_{w_j}(z)\Li_1^j(z)$
	with all $w_j$ in $x_0\CC\ang{X}x_1$.
	Thus $A_{u,v}\in\MZV_k$ and by induction we get also that 
	$\alpha(u,v)$ and $\beta(u,v)$ belong to $\bigoplus_{j=0}^{k-1}\MZV_{j}\otimes \QQ\ang{X}_{k-j-1}x_1$.
	
	To verify the last claim let us consider $u=x_0u'$, $v=x_0v'$.
	The recursive definition~\eqref{eq:alphabetarecursion} with $a=b=x_0$
	shows that $\alpha(u,v)$ and $\beta(u,v)$ would be convergent if we can
	show that $A_{u',x_0v'}=A_{x_0u',v'}$. But the calculation of
	the derivative of $F_{u,v}$ shows that
	\begin{align*}
	&i\frac{d}{dt}F_{u,v}(e^{it}) 
	\= A_{u',x_0v'}-A_{x_0u',v'}+\Li_{w}(e^{it})+\Li_{w'}(e^{-it})
	\end{align*}
	for some $w,w'\in \CC\ang{X}x_1$ and hence, by Lemma~\ref{lem:average},
	we obtain $A_{u',x_0v'}=A_{x_0u',v'}$.
\end{proof}

\begin{remark}
	The proof shows that we may take $\beta(u,v)=\alpha(v,u)$.
	Note also that if we extend the definition of $\alpha(u,v)$, 
	$\beta(u,v)$, and $A_{u,v}$ to bilinear functionals on 
	$\CC\ang{X}x_1\times \CC\ang{X}x_1$, the identity~\eqref{eq:polylogproduct} remains true for all $u,v\in\CC\ang{X}x_1$.	
\end{remark}

As a corollary of the above proposition we have the following curious fact.

\begin{corollary}
	For all $u,v\in X^{\times}x_1$ we have
	\[\int_{0}^{1}\Li_{u}(e^{2\pi i t})\Li_{v}(e^{-2\pi i t})dt
	\;\;\in\;\; \MZV_{|u|+|v|}\,.\]
\end{corollary}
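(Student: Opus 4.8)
The plan is to integrate the pointwise product identity of Proposition~\ref{prop:polylogproduct} around the unit circle and to observe that, after integration, only the constant term $A_{u,v}$ survives.

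Concretely, I would first substitute $z=e^{2\pi i t}$, so that $z^{-1}=e^{-2\pi i t}$ and $z\in\CC\sm[0,+\infty)$ for every $t\in(0,1)$. Applying~\eqref{eq:polylogproduct} then gives, for all such $t$,
\[\Li_{u}(e^{2\pi i t})\,\Li_{v}(e^{-2\pi i t}) \= A_{u,v}+\Li_{\alpha(u,v)}(e^{2\pi i t})+\Li_{\beta(u,v)}(e^{-2\pi i t})\,,\]
where $A_{u,v}\in\MZV_{k}$ with $k=|u|+|v|$ and $\alpha(u,v),\beta(u,v)\in\bigoplus_{j=0}^{k-1}\MZV_{j}\otimes\QQ\ang{X}_{k-j-1}x_1$. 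Before integrating I would check that both sides are integrable on $[0,1]$: for any $w\in X^{\times}x_1$ the recursion~\eqref{eq:polylogrec} shows that $\Li_{w}$ is analytic on $\CC\sm[1,\infty)$ and grows at most like a fixed power of $\log(1-z)$ as $z\to 1$, so $t\mapsto\Li_{u}(e^{2\pi i t})$ and $t\mapsto\Li_{v}(e^{-2\pi i t})$ lie in $L^1([0,1])$, and hence so does their product. Thus the integral in the statement is well-defined and the displayed identity may be integrated over $[0,1]$ term by term.

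Upon integrating, the constant term contributes $A_{u,v}$. Writing $\alpha(u,v)=\sum_{i}c_i\otimes w_i$ and $\beta(u,v)=\sum_{i}c'_i\otimes w'_i$ with $c_i,c'_i\in\MZV$ and $w_i,w'_i\in X^{\times}x_1$, the two remaining contributions are $\sum_{i}c_i\int_{0}^{1}\Li_{w_i}(e^{2\pi i t})\dt$ and $\sum_{i}c'_i\int_{0}^{1}\Li_{w'_i}(e^{-2\pi i t})\dt$; the first vanishes by Lemma~\ref{lem:average}, and the second vanishes by the same lemma after the substitution $t\mapsto 1-t$, which turns $e^{-2\pi i t}$ into $e^{2\pi i t}$. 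Hence $\int_{0}^{1}\Li_{u}(e^{2\pi i t})\Li_{v}(e^{-2\pi i t})\dt=A_{u,v}\in\MZV_{|u|+|v|}$, as claimed. The argument is short; the only delicate point is the integrability near $z=1$ and the consequent justification for integrating the pointwise identity---valid a priori only on the open interval $(0,1)$---over all of $[0,1]$.
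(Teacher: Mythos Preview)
Your argument is correct and follows exactly the paper's route: integrate~\eqref{eq:polylogproduct} over $|z|=1$, kill the $\Li_{\alpha}$ and $\Li_{\beta}$ terms via Lemma~\ref{lem:average}, and conclude that the integral equals $A_{u,v}\in\MZV_{|u|+|v|}$. One cosmetic point: the clause ``lie in $L^1$, and hence so does their product'' is not a valid implication on its own---what you really use (and already stated) is that each factor has at most polylogarithmic blow-up near $z=1$, so the product does too and is therefore integrable.
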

\begin{proof}
	It follows from~\eqref{eq:polylogproduct}
	and Lemma~\ref{lem:average} that
	\begin{equation*} \label{eq:cuvinnerproduct}
	A_{u,v} \= \int_{0}^{1}\Li_{u}(e^{2\pi i t})\Li_{v}(e^{-2\pi i t})dt\,,
	\end{equation*}
	and the claim then follows from Proposition~\ref{prop:polylogproduct}.
\end{proof}

As a further corollary, note that when $u$ and $v$ are both convergent 
equation~\eqref{eq:polylogproduct} implies
	\[
	\re \Li_{u}(z)\Li_{v}(\ol{z}) \= A_{u,v}
	+\re \Li_{\alpha(u,v)+\beta(u,v)}(z)\,,\qquad |z|=1\,.
	\]
This formula thus gives a purely algebraic solution of the Dirichlet boundary 
value problem $u(z)=\phi(z)$ for $|z|=1$ where $\phi$ is of the form 
$\phi(z)=\re \Li_{u}(z)\Li_{v}(\ol{z})$ and $u$ is sought to be harmonic
in~$\DD$. It is exactly in this form that we will
use Proposition~\ref{prop:polylogproduct} in the proof of Theorem~\ref{thm:mainthm}.

\smallskip
\section{Proof of Theorem~\ref{thm:mainthm}}
Let us fix the polygon $\Pp_N\subset \CC$ to be the convex hull of 
$\{c\zeta^j\}_{0\le j<N}$, where $\zeta$ is a primitive $n$-th root of unity, 
and $c>0$ is chosen so that $\Area(\Pp_N) = \pi$. 
We will utilize the classical Schwarz-Christoffel map,
$f\colon\mathbb{D}\to\Pp_N$, which maps the unit disk~$\DD$ conformally 
onto $\Pp_N$. It is given by any of the following equivalent expressions
	\begin{equation}\label{eq:confmap}
	f(z) \= c_Nz\, {}_{2}F_{1}\Big(\frac{2}{N},\frac{1}{N},1+\frac{1}{N};z^N\Big) 
	\= c_N\int_{0}^{z}\frac{d\zeta}{(1-\zeta^N)^{2/N}}
	\end{equation}
where the constant
	\begin{equation} \label{eq:schwarzconst}
	c_N \=
	\sqrt{\frac{\Gamma(1-1/N)^2\Gamma(1+2/N)}{\Gamma(1+1/N)^2\Gamma(1-2/N)}}
	\end{equation}
is determined by the condition that $\Area(\Pp_N) = \pi$.
Here ${}_{2}F_{1}$ is the ordinary Gauss hypergeometric function
	\[{}_{2}F_{1}(a,b,c;z) \= 
	\sum_{n\ge0}\frac{(a)_n(b)_n}{(c)_n}\frac{z^n}{n!}\,,
	\qquad |z|<1\,,\]
where $(x)_n\coloneqq x(x+1)\dots(x+n-1)$ denotes the rising Pochhammer symbol.

The first fact that we will need is that the function 
$F_N(x) \coloneqq {}_{2}F_{1}(2/N,1/N,1+1/N;x)$ can be 
expanded as a power series in~$1/N$ (convergent for $N\ge 3$) 
whose coefficients are multiple polylogarithms.
Let us recall the definition of Nielsen polylogarithms 
(see~\cite{Ko}, \cite{CGR})
	\begin{equation}\label{eq:nielsendef}
	S_{n,p}(z) \= \frac{(-1)^{n+p-1}}{(n-1)!p!}
	\int_{0}^{1}\log^{n-1}(t)\log^{p}(1-zt)\frac{dt}{t}
	\= \Li_{1,\dots,1,n+1}(z)
	\= \Li_{x_0^nx_1^p}(z)\,.
	\end{equation}    
\begin{lemma}
	For all $N\ge3$ and $|x|\le 1$ we have
	\begin{equation} \label{eq:hypergexp}
	{}_{2}F_{1}\Big(\frac{2}{N},\frac{1}{N},1+\frac{1}{N};x\Big)
	\= 1+\sum_{n=2}^{\infty}S_n(x)N^{-n} \,,
	\end{equation}
	where
	\[S_{n}(x) \= \sum_{j=1}^{n-1}(-1)^{j-1}2^{n-j}S_{j,n-j}(x)\,.\]
\end{lemma}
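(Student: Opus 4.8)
The plan is to expand the hypergeometric series defining $F_N$ term by term in powers of $1/N$ and to recognize the resulting coefficients as Nielsen polylogarithms. Set $\eps=1/N$. First I would simplify the general coefficient of the series: telescoping the Pochhammer symbols gives $(\eps)_n/(1+\eps)_n=\eps/(n+\eps)$ and $(2\eps)_n/n!=\tfrac{2\eps}{n}\prod_{j=1}^{n-1}(1+2\eps/j)$, whence for $n\ge1$
\[
\frac{(2\eps)_n(\eps)_n}{(1+\eps)_n\,n!}\=\frac{2\eps^2}{n(n+\eps)}\prod_{j=1}^{n-1}(1+2\eps/j)\,.
\]
In particular every term with $n\ge1$ is $O(\eps^2)$, so the term $n=0$ accounts for the leading $1$ in~\eqref{eq:hypergexp}.

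Next I would expand each factor as a power series in $\eps$, using $\tfrac1{n+\eps}=\sum_{m\ge0}(-\eps)^m n^{-m-1}$ and $\prod_{j=1}^{n-1}(1+2\eps/j)=\sum_{k\ge0}(2\eps)^k e_k(1,1/2,\dots,1/(n-1))$ with $e_k$ the elementary symmetric polynomial, multiply out, substitute into $F_N(x)=\sum_{n\ge0}\tfrac{(2\eps)_n(\eps)_n}{(1+\eps)_n n!}x^n$, and sum over $n$ first. Since $\sum_{n}\sum_{1\le j_1<\dots<j_k<n}\tfrac{x^n}{j_1\cdots j_k\,n^{m+2}}=\Li_{\{1\}^k,m+2}(x)$, this gives
\[
F_N(x)\= 1+2\sum_{k,m\ge0}(-1)^m 2^{k}\,\eps^{k+m+2}\,\Li_{\{1\}^k,m+2}(x)\,.
\]
By~\eqref{eq:polylogrel} and~\eqref{eq:nielsendef} one has $\Li_{\{1\}^k,m+2}(x)=\Li_{x_0^{m+1}x_1^{k+1}}(x)=S_{m+1,k+1}(x)$; extracting the coefficient of $\eps^n$ and reindexing via $j=m+1$ (so that $n-j=k+1$) turns the double sum into $\sum_{j=1}^{n-1}(-1)^{j-1}2^{n-j}S_{j,n-j}(x)$, which is exactly $S_n(x)$. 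This proves~\eqref{eq:hypergexp}, modulo convergence.

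It remains to justify the rearrangements and to pin down the precise range $N\ge3$. Since the Nielsen polylogarithms have nonnegative power-series coefficients, for $|x|\le1$ every series above is dominated termwise in absolute value by its value at $x=1$, so by Tonelli it suffices to prove $\sum_{k,m\ge0}2^k\eps^{k+m+2}\zeta(\{1\}^k,m+2)<\infty$ for $\eps<\tfrac12$. Here I would use the generating identity $\sum_{k\ge0}t^k\zeta(\{1\}^k,s)=\sum_{n\ge1}n^{-s}\prod_{j=1}^{n-1}(1+t/j)$ together with the elementary bound $\prod_{j=1}^{n-1}(1+t/j)=\tfrac{\Gamma(n+t)}{\Gamma(1+t)\Gamma(n)}\le \tfrac{n^{t}}{\Gamma(1+t)}$ (valid for $0\le t<1$ by log-convexity of $\Gamma$), which together give $\sum_{k\ge0}t^k\zeta(\{1\}^k,s)\le \zeta(s-t)/\Gamma(1+t)$ whenever $s-t>1$. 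Taking $t=2\eps$ and $s=m+2$, using $\zeta(m+2-2\eps)\le\zeta(2-2\eps)$, and summing the resulting geometric series in $m$ bounds the whole double sum by $\tfrac{2\eps^2\,\zeta(2-2\eps)}{(1-\eps)\,\Gamma(1+2\eps)}$, which is finite exactly when $\eps<\tfrac12$. The same estimate shows that the right-hand side of~\eqref{eq:hypergexp} converges absolutely and uniformly on $\ol{\DD}$ for every $N\ge3$; as it agrees with $F_N(x)$ for $|x|<1$ and both sides are continuous on $\ol{\DD}$ when $N\ge3$, the identity extends to all $|x|\le1$.

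The genuinely delicate point is this last, analytic, step — obtaining the sharp threshold $N\ge3$ rather than something weaker. Everything comes down to the bound for $\sum_{k\ge0}t^k\zeta(\{1\}^k,s)$, and $\eps=\tfrac12$ (that is, $N=2$) is truly borderline: it is precisely where $\zeta(2-2\eps)$ diverges, consistently with the fact that $F_2(1)={}_2F_1(1,\tfrac12,\tfrac32;1)=\sum_{n\ge0}(2n+1)^{-1}$ diverges. By contrast the symbolic part — the Pochhammer simplification, the elementary-symmetric expansion, and the final reindexing — is entirely routine.
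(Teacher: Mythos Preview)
Your proof is correct, and it takes a genuinely different route from the paper's. The paper starts from the Euler integral representation
\[
{}_{2}F_{1}\Big(\tfrac{2}{N},\tfrac{1}{N},1+\tfrac{1}{N};x\Big)
\= 1+\frac{1}{N}\int_{0}^{1}t^{1/N}\big((1-tx)^{-2/N}-1\big)\frac{dt}{t}\,,
\]
expands $t^{1/N}$ and $(1-tx)^{-2/N}$ as exponential series in $1/N$, and reads off the coefficients directly from the \emph{integral} definition~\eqref{eq:nielsendef} of $S_{n,p}$. You instead work from the \emph{series} side: you simplify the Pochhammer ratio to $\tfrac{2\eps^2}{n(n+\eps)}\prod_{j<n}(1+2\eps/j)$, expand via elementary symmetric polynomials and a geometric series, and identify the resulting nested harmonic sums as $\Li_{\{1\}^k,m+2}(x)=S_{m+1,k+1}(x)$ through~\eqref{eq:limdef} and~\eqref{eq:polylogrel}.

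The paper's argument is essentially a one-liner and makes the Nielsen polylogarithms appear instantly from their integral form, but it does not discuss convergence at all. Your approach is longer but entirely elementary, and it yields as a byproduct a quantitative bound that pins down the threshold $N\ge3$ as sharp: the estimate $\sum_{k}(2\eps)^k\zeta(\{1\}^k,s)\le\zeta(s-2\eps)/\Gamma(1+2\eps)$ blows up exactly at $\eps=\tfrac12$, in agreement with the divergence of $F_2(1)={}_2F_1(1,\tfrac12,\tfrac32;1)$. That analytic sharpening is something the paper's proof does not supply.
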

\begin{proof}
	This follows by expanding in powers of $1/N$ the right hand side of
	\[{}_{2}F_{1}\Big(\frac{2}{N},\frac{1}{N},1+\frac{1}{N};x\Big)
	\= 1+\frac{1}{N}\int_{0}^{1}t^{1/N}((1-tx)^{-2/N}-1)\frac{dt}{t}\]
	and using the definition~\eqref{eq:nielsendef}.
\end{proof}

We will also need a formula for the asymptotic expansion of the Bessel 
function $J_0(x)$ around its zero. Recall that $J_0(x)$ satisfies 
$xJ_0''(x)+J_0'(x)+xJ_0(x)=0$ and
can be defined by the Taylor series
	\[J_0(x) \= \sum_{n\ge 0}\frac{(-\frac{x^2}{4})^n}{n!^2}\,.\]
\begin{proposition} \label{prop:besselasymp}
	(i) For $n\ge 0$ define
	\[E_n(x) \coloneqq \sum_{j=0}^{n}\frac{e^{2jx}(x+H_{n-j}-H_j)}{j!^2(n-j)!^2}\,,\]
	where $H_n=\sum_{j=1}^{n}\frac{1}{j}$, $H_0=0$.
	Then $E_n(x) = O(x^{2n+1})$, $x\to 0$.
	
	(ii)
	Let $\alpha$ be a zero of $J_0(x)$. Then
	\begin{equation} \label{eq:besselexp}
	-\frac{J_0(\alpha e^{x})}{\alpha J_1(\alpha)} \= 
	\sum_{n\ge 0}(-\tfrac{\alpha^2}{4})^n
	E_n(x)\,.
	\end{equation}
\end{proposition}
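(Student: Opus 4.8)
The plan is to deduce both statements from a single linear second-order ODE. Put $s=-\alpha^2/4$, introduce the generating function $\Psi(s,x)\coloneqq\sum_{n\ge0}s^nE_n(x)$, and set
\[
g(u)\coloneqq\sum_{m\ge0}\frac{u^m}{m!^2}\,,\qquad h(u)\coloneqq\sum_{k\ge1}\frac{H_ku^k}{k!^2}\,,
\]
so that $g(-y^2/4)=J_0(y)$; here and below $g'$ and $h'$ denote derivatives with respect to $u$. The first step is to put $\Psi$ in closed form: writing $n=j+k$ in the definition of $E_n$ and splitting $x+H_{n-j}-H_j$ into its $(x-H_j)$ and $H_k$ parts, the double sum rearranges into
\[
\Psi(s,x)\=x\,g(s)\,g(se^{2x})-g(s)\,h(se^{2x})+h(s)\,g(se^{2x})\,.
\]

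Next I would check that $\Psi$ solves $\partial_x^2\Psi=4se^{2x}\Psi$ with $\Psi(s,0)=0$ and $\partial_x\Psi(s,0)=1$. Two elementary series identities, $\tfrac{d}{du}\big(ug'(u)\big)=g(u)$ and $\tfrac{d}{du}\big(uh'(u)\big)=h(u)+g'(u)$, give $\partial_x^2g(se^{2x})=4se^{2x}g(se^{2x})$ and $\partial_x^2h(se^{2x})=4se^{2x}h(se^{2x})+2\,\partial_xg(se^{2x})$; plugging these into the closed form, the ODE for $\Psi$ drops out after a one-line cancellation. The identity $\Psi(s,0)=0$ is immediate, and a direct differentiation gives $\partial_x\Psi(s,0)=g(s)^2+2s\big(h(s)g'(s)-g(s)h'(s)\big)$. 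At this point I would invoke the Wronskian-type relation
\[
u\big(h(u)g'(u)-g(u)h'(u)\big)\=\tfrac12\big(1-g(u)^2\big)\,,
\]
which follows by verifying $\tfrac{d}{du}\big(u(hg'-gh')\big)=-gg'$ (using the same two identities) and integrating from $u=0$, where $g(0)=1$. Hence $\partial_x\Psi(s,0)=g(s)^2+(1-g(s)^2)=1$.

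With the ODE in hand both parts follow. Comparing coefficients of $s^n$ in $\partial_x^2\Psi=4se^{2x}\Psi$ gives $E_0''=0$ and $E_{n+1}''=4e^{2x}E_n$ for $n\ge0$, while $\Psi(s,0)=0$ and $\partial_x\Psi(s,0)=1$ give $E_n(0)=0$ for all $n$, $E_0'(0)=1$, and $E_n'(0)=0$ for $n\ge1$; together with $E_0''=0$ these force $E_0(x)=x$. Part (i) is then an induction: if $E_n(x)=O(x^{2n+1})$, then $E_{n+1}''(x)=4e^{2x}E_n(x)=O(x^{2n+1})$, and integrating twice---both constants of integration being $E_{n+1}(0)=E_{n+1}'(0)=0$---yields $E_{n+1}(x)=O(x^{2n+3})$. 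For part (ii), substituting $y=\alpha e^x$ into Bessel's equation $yJ_0''(y)+J_0'(y)+yJ_0(y)=0$ shows $\phi(x)\coloneqq J_0(\alpha e^x)$ satisfies $\phi''+\alpha^2e^{2x}\phi=0$, with $\phi(0)=J_0(\alpha)=0$ and $\phi'(0)=\alpha J_0'(\alpha)=-\alpha J_1(\alpha)$ (using $J_0'=-J_1$, and $J_1(\alpha)\ne 0$ since consecutive Bessel functions have no common zero). Thus $-\phi/(\alpha J_1(\alpha))$ and $\Psi(-\alpha^2/4,x)$ are both solutions of $y''+\alpha^2e^{2x}y=0$ with the same initial data $y(0)=0$, $y'(0)=1$, so by uniqueness they agree---which is precisely \eqref{eq:besselexp}.

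The main obstacle is the Wronskian-type identity $u(hg'-gh')=\tfrac12(1-g^2)$, that is, the single value $\partial_x\Psi(s,0)=1$; this is what captures the cancellations behind $E_n'(0)=0$ for $n\ge1$. (That fact can alternatively be proved by hand as the harmonic--binomial identity $2\sum_j\binom{n}{j}^2(n-2j)H_j=-\binom{2n}{n}$, but the ODE route handles it uniformly and cleanly.) The remaining ingredients---the rearrangement producing the closed form for $\Psi$, the two series relations for $g$ and $h$, and the ODE uniqueness argument---are routine.
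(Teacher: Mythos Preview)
Your proof is correct. For part~(ii) you and the paper do essentially the same thing: both check that each side of~\eqref{eq:besselexp} satisfies $y''+\alpha^2e^{2x}y=0$ with $y(0)=0$, $y'(0)=1$ and invoke uniqueness; the paper verifies the recursion $E_{n+1}''=4e^{2x}E_n$ by a direct coefficient identity, while you obtain it from your closed form $\Psi=xg(s)g(se^{2x})-g(s)h(se^{2x})+h(s)g(se^{2x})$, but this is the same idea packaged differently.

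The genuine divergence is in part~(i) and, relatedly, in how the initial condition $\partial_x\Psi(s,0)=1$ (equivalently $E_n'(0)=0$ for $n\ge1$) is established. The paper proves~(i) first, by a Borel-transform trick: it passes from $E_n(x)=\sum a_mx^m$ to $W_n(x)=\sum m!\,a_mx^m$ via $W_n(x)=\int_0^\infty E_n(xt)e^{-t}dt$, identifies $W_n$ explicitly as the rational function $4^nx^{2n+1}/\prod_{j=1}^n(1-2jx)^2$ by a residue computation, and reads off $E_n(x)=O(x^{2n+1})$; the initial conditions for~(ii) are then deduced from~(i). You reverse the logic: you prove $\partial_x\Psi(s,0)=1$ directly via the Wronskian-type identity $u(hg'-gh')=\tfrac12(1-g^2)$, and then obtain~(i) by induction from the recursion $E_{n+1}''=4e^{2x}E_n$ together with $E_{n+1}(0)=E_{n+1}'(0)=0$. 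Your route is more unified (one ODE argument covers both parts) and avoids the partial-fraction bookkeeping; the paper's route, on the other hand, yields the stronger explicit evaluation of $W_n$, which is not visible from your argument.
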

\begin{proof}
	(i) Define $W_n$ by 
	\[W_n(x)\coloneqq\int_{0}^{\infty}E_n(xt)e^{-t}dt
	\= \sum_{j=0}^{n}\frac{1}{j!^2(n-j)!^2}\Big(\frac{x}{(1-2jx)^2}+
	\frac{H_{n-j}-H_j}{1-2jx}\Big)\,.\]
	Note that if $E_n(x)=\sum_{m}a_mx^m$, then $W_n(x)=\sum_{m}m!a_mx^m$, 
	so it suffices to show that $W_n(x)=O(x^{2n+1})$, $x\to 0$. 
	We claim that
	\[W_n(x) \= \frac{4^nx^{2n+1}}{\prod_{j=1}^{n}(1-2jx)^2}\,,\] 
	which clearly implies $W_n(x)=O(x^{2n+1})$. Let $R_n(x)$ denote the difference between the left hand side and the right hand side in the 
	above equation. A simple calculation shows that $R_n(x)\to 0$ 
	as $x\to\infty$, and since $R_n\in\QQ(x)$, 
	it is enough to show that it has no poles. 
	The only potential singularities are at $x=\frac{1}{2k}$, $1\le k \le n$. If we let $x=\frac{1}{2k}-\eps$, then
	\[n!^2R_n(x) \= \binom{n}{k}^2\Big(\frac{\frac{1}{2k}-\eps}{(2k\eps)^2}+\frac{H_{n-k}-H_k}{2k\eps}\Big)-\frac{\eps^{-2}(\frac{1}{2k}-\eps)^{2n+1}}{\prod_{j\ne k}(\frac{1}{2j}-\frac{1}{2k}+\eps)^2}+O(1)\,,\qquad \eps\to 0\,.\]
	(Here $\prod_{j\ne k}$ denotes the product over $1\le j\le n$, $j\ne k$.) Then the coefficient in front $\eps^{-2}$ vanishes since
	$\binom{n}{k}^2=\prod_{j\ne k}(\frac{k}{j}-1)^{-2}$, and for $\eps^{-1}$
	the vanishing is equivalent to
	\[1-2k(H_{n-k}-H_k)
	\= (2n+1)+\sum_{\substack{1\le j\le n\\ j\ne k}}\frac{2j}{k-j}\,,\]
	which is again easy to verify.
	
	(ii) Let us denote the left hand side of~\eqref{eq:besselexp} 
	by $f(x)$ and the right hand side by~$g(x)$.
	From the differential equation
	$xJ_0''(x)+J_0'(x)+xJ_0(x)$ together with $J_0'(x)=-J_1(x)$, we obtain 
	that $f''(x)+\alpha^2e^{2x}f(x)=0$ and $f(0)=0$, $f'(0)=1$. 
	Thus, it is enough to check that $g(x)$ satisfies the same differential
	equation and initial conditions.  The conditions
	$g(0)=0$, $g'(0)=1$ follow from part~(i). 
	Using the easily checked identity
	\[\frac{e^{2jx}j^2(x+H_{n+1-j}-H_j)+e^{2jx}j}{j!^2(n+1-j)!^2}
	\=\frac{e^{2jx}j^2(x+H_{n-j+1}-H_{j-1})}{(j-1)!^2(n-j+1)!^2}
	\,,\qquad 1\le j\le n+1\]
	we get that $g''(x)+\alpha^2e^{2x}g(x)$.
\end{proof}

\begin{proposition}\label{prop:realeigenvalues}
	Let $\Omega$ be a bounded domain in $\RR^2$, 
	and let $f\colon\ol{\Omega}\to\RR$ be a function in 
	$C^2(\Omega)\cap C(\ol{\Omega})$ that satisfies
	$\Delta f+\lambda' f=0$ in $\Omega$, $\frac{1}{|\Omega|}\int_{\Omega}|f(x)|^2dx=1$,
	and $\sup_{x\in \dl \Omega}|f(x)|\le \eps$, where $\eps<1$.
	Then there exists a Dirichlet eigenvalue $\lambda$ of $\Omega$
	such that $|\lambda'-\lambda|\le \lambda\eps$.
\end{proposition}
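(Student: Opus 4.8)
The plan is to correct the boundary values of $f$ and then compare with the exact Dirichlet spectrum of $\Omega$ via a Parseval expansion. First I would let $w$ be the harmonic function on $\Omega$ with $w|_{\dl\Omega}=f|_{\dl\Omega}$; by the maximum principle $|w|\le\sup_{\dl\Omega}|f|\le\eps$ on all of $\Omega$, and hence $\frac1{|\Omega|}\int_\Omega w^2\le\eps^2$. Since $\Delta f+\lambda' f=0$ and $\Delta w=0$, the function $u\coloneqq f-w$ vanishes on $\dl\Omega$ and solves $-\Delta u=\lambda' f$ in $\Omega$; equivalently $u=\lambda' Gf$, where $G$ denotes the Green operator of the Dirichlet Laplacian on $\Omega$, i.e.\ the compact self-adjoint operator on $L^2(\Omega)$ with $G\phi_k=\lambda_k^{-1}\phi_k$ for an orthonormal eigenbasis $\{\phi_k\}_{k\ge1}$ of $L^2(\Omega)$ satisfying $-\Delta\phi_k=\lambda_k\phi_k$. (Note $f\in L^2(\Omega)$, being continuous on the compact set $\ol\Omega$.)

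Second, I would expand everything in this eigenbasis. We may assume $\lambda'$ is not itself a Dirichlet eigenvalue, since otherwise $\lambda=\lambda'$ already works. Writing $f=\sum_k f_k\phi_k$ and $w=\sum_k w_k\phi_k$, the identity $f=\lambda' Gf+w$ gives, upon taking inner products with $\phi_k$, that $f_k=\tfrac{\lambda'}{\lambda_k}f_k+w_k$, i.e.\ $f_k=\tfrac{\lambda_k}{\lambda_k-\lambda'}\,w_k$ for every $k$. Let $\lambda$ be an eigenvalue minimizing $|\lambda_k-\lambda'|/\lambda_k$ over $k$ (such a minimum exists because this ratio tends to $1$ as $\lambda_k\to\infty$). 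Then, by Parseval's identity with the normalization $\frac1{|\Omega|}\int_\Omega\phi_k^2=1$,
\[
1\;=\;\frac1{|\Omega|}\int_\Omega f^2\;=\;\sum_k\frac{\lambda_k^2}{(\lambda_k-\lambda')^2}\,w_k^2\;\le\;\frac{\lambda^2}{(\lambda-\lambda')^2}\sum_k w_k^2\;=\;\frac{\lambda^2}{(\lambda-\lambda')^2}\cdot\frac1{|\Omega|}\int_\Omega w^2\;\le\;\frac{\lambda^2\eps^2}{(\lambda-\lambda')^2}\,,
\]
and therefore $(\lambda-\lambda')^2\le\lambda^2\eps^2$, i.e.\ $|\lambda'-\lambda|\le\lambda\eps$, which is the desired conclusion.

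The only delicate point — and the main, albeit mild, obstacle — is the identity $u=\lambda' Gf$ used above. It requires that the Dirichlet problem on $\Omega$ be classically solvable (so that $w$, and hence $u=f-w$, is continuous on $\ol\Omega$) and that the Green potential $Gf$ of a bounded function be continuous up to $\ol\Omega$ with vanishing boundary values; granting this, $u$ and $\lambda' Gf$ are two solutions of $-\Delta(\cdot)=\lambda' f$ that are continuous on $\ol\Omega$ and agree on $\dl\Omega$, hence coincide by the maximum principle. For domains with Lipschitz boundary, in particular for the regular polygons of interest in this paper, these are classical facts which I would simply quote. Everything else is elementary Hilbert-space manipulation, so I expect no further difficulty.
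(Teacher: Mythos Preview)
Your argument is correct and is, in essence, the original Moler--Payne proof; the paper itself does not give any argument but simply cites \cite[Theorem~1]{MoPa}. The harmonic correction $w$, the identity $f=\lambda'Gf+w$, and the Parseval estimate are exactly the ingredients of that reference, so your approach coincides with what the paper invokes. Your caveat about boundary regularity is well placed: as stated for an arbitrary bounded domain the classical solvability of the Dirichlet problem (hence the existence of $w\in C(\ol\Omega)$) is not automatic, but it holds for the Lipschitz polygons used in the application, which is all the paper needs.
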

\begin{proof}
	This is a special case of~\cite[Theorem~1]{MoPa}.
\end{proof}

We are now ready to prove our main result.
\begin{proof}[Proof of Theorem~\ref{thm:mainthm}]
	To simplify notation we set $\lambda^{(N)}\coloneqq\lambda_k(\Pp_N)$ 
	and $\lambda\coloneqq\lambda_k(\DD)$.
	Since the $\lambda$-eigenfunction of~$\DD$ is radially-symmetric, 
	we may assume that, for all sufficiently large~$N$, 
	the $\lambda^{(N)}$-eigenfunction of $\Pp_N$ is dihedrally-symmetric. 
	More precisely, if
	\begin{equation}
	\begin{cases}
	\Delta \phi(z) + \lambda^{(N)}\phi(z) \= 0 \,,\\
	\phi(z) \= 0 \,,\qquad z\in\dl\Pp_N\,,
	\end{cases}
	\end{equation}
	then we may assume that $\phi(e^{2\pi i/N}z)=\phi(\ol{z})=\phi(z)$.
	
	By the general theory developed by Vekua~\cite[(13.5), p.~58]{Ve} any function 
	$\phi$ that satisfies $\Delta\phi+\lambda^{(N)}\phi=0$ in $\Pp_N$ can be 
	represented as
	\begin{equation}  \label{eq:vekua}
	\phi(z) \= a_0J_0(\sqrt{\lambda^{(N)}}|z|)
	+ \mathrm{Re}\int_{0}^{z}
	U(t)J_0\Big(\sqrt{\lambda^{(N)}\ol{z}(z-t)}\Big)dt \,,
	\end{equation}
	where $a_0\in\RR$ and $U\colon\Pp_N\to\CC$ is some holomorphic function.
	Since by assumption $\phi(z)$ is dihedrally-symmetric, we may write 
	$U(f(t))=\widetilde{U}(t^N)/t$, where $\widetilde{U}(0)=0$ and the Taylor 
	series of $\widetilde{U}$ at~0 has real coefficients (we recall that
	$f$ is defined by~\eqref{eq:confmap}). Then
	\begin{align*}
	\phi(f(z)) 
	\= a_0J_0(\sqrt{\lambda^{(N)}}|f(z)|) + \mathrm{Re}\int_{0}^{z}
	f'(t)U(f(t))J_0\Big(\sqrt{\lambda^{(N)}\ol{f(z)}(f(z)-f(t))}\Big)dt \\
	\= a_0J_0(\sqrt{\lambda^{(N)}}|f(z)|) 
	+ c_N\mathrm{Re}\int_{0}^{z} (1-t^N)^{-2/N}\widetilde{U}(t^N)
	J_0\Big(\sqrt{\lambda^{(N)}\ol{f(z)}(f(z)-f(t))}\Big)\frac{dt}{t} \,.
	\end{align*}
	After replacing $z$ and $t$ by $z^{1/N}$ and $t^{1/N}$ respectively and 
	setting $\psi(z)\coloneqq \phi(f(z^{1/N}))$ and
	$V(z) \coloneqq \frac{\widetilde{U}(z)}{(1-z)^{2/N}}$ 
	we obtain
	\begin{equation} \label{eq:mainrepr}
	\psi(z) \= a_0J_0(\rho^{1/2} |z|^{1/N}|F_N(z)|) 
	+ \frac{c_N}{N}\mathrm{Re}\int_{0}^{z}V(t)K(z,t) \frac{dt}{t} \,,
	\end{equation}
	where we set $\rho\coloneqq c_N^2\lambda^{(N)}$ and
	\begin{equation*}
	K(z,t) \coloneqq J_0\Big(\rho^{1/2} |z|^{1/N}
	\sqrt{F_{N}(\ol{z})(F_{N}(z)-(t/z)^{1/N}F_N(t))}\Big)\,.
	\end{equation*}
	Now we make an ansatz that 
	\begin{equation} \label{eq:mainansatz}
	\begin{split}
	\rho &\es\sim \lambda\exp\Big(\frac{\kappa_1}{N}+\frac{\kappa_2}{N^2}+\dots\Big)\,,\\
	V(z) &\es\sim V_0(z)+\frac{V_1(z)}{N}+\frac{V_2(z)}{N^2}+\dots\,.
	\end{split}
	\end{equation}
	where $V_j\colon\DD\to\CC$ are holomorphic and $V_j(0)=0$.
	
	In view of Proposition~\ref{prop:besselasymp} (ii) it is convenient to 
	set $a_0=\frac{c_N}{\lambda^{1/2}J_1(\lambda^{1/2})}$. We will 
	expand~\eqref{eq:mainrepr} as an asymptotic series in powers of $1/N$
	and then recursively compute $\kappa_j$ and $V_j$ using 
	the boundary condition $\psi(z)=0$, $|z|=1$.
	Note that by~\eqref{eq:hypergexp} $F_N(x)=1+O(N^{-2})$ and 
	using~\eqref{eq:hypergexp} and the expansion 
	$(t/z)^{1/N}=\sum_{n\ge 0}\frac{\log^n(t/z)}{n!}N^{-n}$ we get
	\begin{align*}
	K(z,t) &\= \sum_{n=0}^{r}(-\tfrac{\rho}{4})^{n}
	\frac{F_N(\ol{z})^n(F_N(z)-(t/z)^{1/N}F_N(t))^n}{n!^2} + O(N^{-r-1})\\
	&\= \sum_{u,v,w,m}
	\gamma_{u,v,w,m}\Li_{u}(\ol{z})\Li_{v}(z)\Li_{w}(t)\log^m(z/t)N^{-|u|-|v|-|w|-m}+O(N^{-r-1})\,,
	\end{align*}
	where $\gamma_{u,v,w,m}$ are coefficients that depend on $\kappa_i$
	and the summation is over words $u,v,w\in x_0X^{\times}x_1$ and $m\ge0$ 
	satisfying $|u|+|v|+|w|+m\le r$.
	We get a similar expression (involving only products 
	$\Li_{u}(z)\Li_{v}(\ol{z})$) after expanding $a_0J_0(\rho^{1/2}|F_N(z)|)$
	using~\eqref{eq:besselexp}.
	
	We claim that $\kappa_i$ and~$V_i(z)$ can be calculated inductively by 
	comparing the coefficients of the $1/N$-expansion. Indeed, comparing the 
	coefficients of $1/N$ we see that
	\[\frac{\kappa_1}{2} - \re \int_{0}^{z}V_0(t)\frac{dt}{t} \= 0\,, \qquad |z|=1\,,\]
	so that $\kappa_1=V_0(z)=0$. 
	In general, assume that $\kappa_i\in \MZV_i[\lambda]$
	and $V_{i-1}(z)=\Li_{v_{i-1}}(z)$ for $i=1,\dots,k$, 
	where $v_i\in\MZV[\lambda]\ang{X}x_1$ is of total weight $i$
	(we define the total weight of $\lambda^azw$, where $z\in\MZV_k$ 
	to be $k+|w|$). 
	Note that by Lemma~\ref{lem:convolution}
	\begin{align*}
	\int_{0}^{z}\Li_{u}(\ol{z})\Li_{v}(z)\Li_{w}(t)\log^m(z/t)\frac{dt}{t}
	\= m!\Li_{u}(\ol{z})\Li_{w'}(z)\,,
	\end{align*}
	where $w'=v\shuffle x_0^{m+1}w$. Thus, when comparing the coefficients
	of $N^{-k-1}$, we need to ensure an identity of the form
	\[\frac{\kappa_{k+1}}{2}-\re\int_{0}^{z}V_k(t)\frac{dt}{t} - 
	\sum_{u,v}\gamma_{u,v}\re\Li_{u}(z)\Li_{v}(\ol{z}) \= 0\,,\qquad |z|=1\,,\]
	where the terms in the sum only depend on already computed quantities 
	$\kappa_1,\dots,\kappa_{k}$ and $V_0(z),\dots,V_{k-1}(z)$
	and all have total weight $k+1$ 
	(where we define the total weight of $\gamma\Li_{u}(z)\Li_{v}(\ol{z})$ for $\gamma\in \MZV_k[\lambda]$ to be $|u|+|v|+k$). 
	By Proposition~\ref{prop:polylogproduct} this amounts to setting
	\begin{align*}
	x_0v_{k}     &\= -\sum_{u,v}\gamma_{u,v}(\alpha(u,v)+\beta(u,v))\,,\\
	\frac{\kappa_{k+1}}{2} &\= \sum_{u,v}\gamma_{u,v}A_{u,v}\,.
	\end{align*}
	This indeed can be done since by assumption the elements $u$ and $v$ are 
	convergent and hence $\alpha(u,v),\beta(u,v)\in x_0\CC\ang{X}x_1$.
	This gives an explicit algebraic recursion for $\kappa_n$ and $V_n(z)$
	that shows, in particular, that $\kappa_n \in \MZV_{n}[\lambda]$.
	In Table~\ref{tab:vn} we list the functions $V_n(z)$ 
	for $n\le 4$. We also note that the coefficients $\kappa_n$ 
	vanish for $n\le 4$.
	\begin{table}
		{\def\arraystretch{1.5}
			\begin{tabularx}{\linewidth}{r|R}
				$n$ & $V_n(z)$ \\
				\hline
				$0$   & $0$ \\
				$1$   & $2\Li_1(z)$ \\
				$2$   & $(\tfrac{\lambda}{2}-2)\Li_2(z)+4\Li_{1,1}(z)$ \\
				$3$   & $(\tfrac{\lambda^2}{16}-\lambda+2)\Li_3(z)+(3\lambda-12)\Li_{1,2}(z)+(\lambda-4)\Li_{2,1}(z)+2^3\Li_{1,1,1}(z)$ \\
				$4$   & $
				(\tfrac{\lambda^3}{192}-\tfrac{\lambda^2}{8}-\tfrac{\lambda}{2}-2)
				\Li_4(z)
				+(\tfrac{\lambda^2}{8}-2\lambda+4)\Li_{3,1}(z)
				+(\tfrac{\lambda^2}{4}-4\lambda+12)\Li_{2,2}(z)$
				$+(\tfrac{5\lambda^2}{8}-8\lambda+28)\Li_{1,3}(z)
				+(2\lambda-8)\Li_{2,1,1}(z)+(6\lambda-24)\Li_{1,2,1}(z)$
				$+(14\lambda-56)\Li_{1,1,2}(z)+2^4\Li_{1,1,1,1}(z)+2Z_3\Li_{1}(z)$ \\
				\hline
		\end{tabularx}}
		\smallskip
		\caption{The functions $V_n(z)$ for $n\le 4$}
		\label{tab:vn}
	\end{table}
	
	We still need to verify that the ansatz~\eqref{eq:mainansatz}
	indeed gives an asymptotic expansion for $\lambda^{(N)}$.
	To see this, note that plugging a truncated solution
	for the boundary condition $\psi(z)=0$, $|z|=1$ back into~\eqref{eq:vekua} 
	we obtain a sequence of functions $\phi^{N,r}\colon\Pp_N\to\RR$, 
	and numbers $\lambda^{N,r}$. 
	The numbers $\lambda^{N,r}$ converge to $\lambda_k$ as $N\to \infty$ 
	for each fixed $r$ and the functions $\phi^{N,r}$ satisfy 
	\[\Delta\phi^{N,r}(z)+\lambda^{N,r}\phi^{N,r}(z)=0
	\,,\qquad  z\in\Pp_N\,,\]
	together with $\|\phi^{N,r}\|_2\gg 1$ 
	and $\|\phi^{N,r}|_{\partial \Pp_N}\|_{\infty}\ll_{r} N^{-r-1}$. 
	Therefore, applying Proposition~\ref{prop:realeigenvalues} shows that 
	$|\lambda^{N,r}-\lambda_k(\Pp_N)|\ll_{r} N^{-r-1}$, so that $\lambda^{N,r}$ 
	indeed give an asymptotic expansion for $\lambda_k(\Pp_N)$.
		
	Finally, the coefficients $C_n(\lambda)$ are related to $\kappa_n(\lambda)$ 
	by the generating series identity
	\begin{equation*} \label{eq:kappadef}
		\exp\big(\kappa_1(\lambda)z+\kappa_2(\lambda)z^2+\dots\big) 
		\=
		\frac{\Gamma(1-z)^2\Gamma(1+2z)}{\Gamma(1+z)^2\Gamma(1-2z)}
		\Big(1+\sum_{n\ge1}{C_n(\lambda)z^n}\Big)\,,
	\end{equation*}
	and using
	\begin{equation} \label{eq:gammaprodexp}
		\frac{\Gamma^2(1+z)\Gamma(1-2z)}{\Gamma^2(1-z)\Gamma(1+2z)}
		\=\exp\Big(\sum_{k\ge 1}\zeta(2k+1)\frac{4(4^k-1)z^{2k+1}}{2k+1}\Big)
	\end{equation}
	we obtain that $C_n\in\MZV_n[\lambda]$.
\end{proof}

\smallskip
\section{Explicit formulas for $C_n(0)$ and $C_n'(0)$}
\label{sec:explicitformulae}	
\begin{proof}[Proof of Theorem~\ref{thm:mainthm2}]
	We follow the algebraic recursion for $\kappa_{k}$ and $V_k(z)$ given in 
	the proof of Theorem~\ref{thm:mainthm} ignoring all the terms 
	involving $\lambda^k$ for $k\ge 2$ (in other words we work modulo the ideal 
	generated by~$\lambda^2$). 
	For this we write 
	\[V(z)=V^{(0)}(z)+\lambda V^{(1)}(z)+O(\lambda^2)\]
	and
	\[\kappa \coloneqq \sum_{n\ge1}\frac{\kappa_n}{N^n}=
	\kappa^{(0)}+\lambda\kappa^{(1)}+O(\lambda^2)\,.\]
	Note that~\eqref{eq:besselexp} implies that
	\[
	\frac{J_0(\lambda^{1/2} e^{x})}{\lambda^{1/2}J_1(\lambda^{1/2})} \= 
	-x + \frac{\lambda}{4}(x+1+(x-1)e^{2x})+O(\lambda^2)\,,
	\]
	so that
	\begin{equation*}
	\begin{split}
	a_0
	J_0(\lambda^{1/2}e^{\kappa/2+\log|F_N(z)|}) \= 
	c_N\Big(-\kappa/2-\log|F_N(z)| + \frac{\lambda}{4}(\kappa/2+\log|F_N(z)|+1\\
	+(\kappa/2+\log|F_N(z)|-1)e^{\kappa}|F_N(z)|^2)\Big)+O(\lambda^2)\,.
	\end{split}
	\end{equation*}
	Similarly, we calculate the kernel $K(z,t)$ to order $O(\lambda^2)$ as
	\begin{align*}
	K(z,t) &\= 1-\frac{\lambda}{4}e^{\kappa} F_N(\ol{z})(F_N(z)-(t/z)^{1/N}F_N(t)) + O(\lambda^2)\,.
	\end{align*}
	If we first look at the boundary condition modulo $O(\lambda)$,
	it reads
	\[\tfrac12\kappa^{(0)}+\log|F_N(z)|\= 
	N^{-1}\re\int_{0}^{z}V^{(0)}(t)\frac{dt}{t}\,,\qquad |z|=1\,.\]
	This clearly implies $N^{-1}\int_{0}^{z}V^{(0)}(t)\frac{dt}{t}\=\log F_N(z)$
	and $\kappa^{(0)}=0$.
	Using this we can rewrite the boundary condition for the linear 
	term in $\lambda$ as 
	\begin{align*}
	\tfrac{1}{2}\kappa^{(1)} 
	- \frac{1}{4}(\log|F_N(z)|+1+(\log|F_N(z)|-1)|F_N(z)|^2)
	- N^{-1}\re \int_{0}^{z}V^{(1)}(t)\frac{dt}{t} \\
	\=  -\frac{1}{4} N^{-1}\re \int_{0}^{z}V^{(0)}(t)
	F_N(\ol{z})(F_N(z)-(t/z)^{1/N}F_N(t))\frac{dt}{t}\,,\qquad |z|=1\,.
	\end{align*}
	Since $N^{-1}\int_{0}^{z}V^{(0)}(t)\frac{dt}{t}=\log F_N(z)$,
	we have $V^{(0)}(t)=Nt\frac{F_N'(t)}{F_N(t)}$ and thus
	\begin{equation*}
	\tfrac{1}{2}\kappa^{(1)} - \frac{1}{4}\log|F_N(z)|
	- N^{-1}\re \int_{0}^{z}V^{(1)}(t)\frac{dt}{t} 
	\=  \frac{1}{4}\Big(1-|F_N(z)|^2+\re F_N(\ol{z})\widetilde{F}_N(z)\Big)
	\end{equation*}
	where we denote $\widetilde{F}_N(z)=\int_{0}^{z}(t/z)^{1/N}F_N'(t)dt$.
	Integrating over $|z|=1$ leads to
	\begin{equation} \label{eq:kappa1int}
	2\kappa^{(1)} \= \int_{0}^{1}\Big(1-|F_N(e^{2\pi i x})|^2
	+F_N(e^{-2\pi ix})\widetilde{F}_N(e^{2\pi i x})\Big)dx\,.
	\end{equation}
	Our goal is to rewrite the right hand side of~\eqref{eq:kappa1int}
	as a hypergeometric series.
	For this we will make use of the integral representation for $F_N(z)$
	\[F_N(z)
	\= 1+\frac{1}{N}\int_{0}^{1}t^{1/N}((1-tz)^{-2/N}-1)\frac{dt}{t}\,.\]
	First, we plug this representation into the definition of $\widetilde{F}_N(z)$ to obtain
	\begin{align*}
	\widetilde{F}_N(z) &\= \frac{2}{N^2}\int_{0}^{z}(x/z)^{1/N}
	\Big(\int_{0}^{1}t^{1/N}(1-xt)^{-2/N-1}dt\Big)dx\\
	&\= \frac{2}{N^2}\int_{0}^{1}\int_{0}^{1}
	(t_1t_2)^{1/N}z(1-zt_1t_2)^{-2/N-1}dt_1dt_2\\
	&\=-\frac{2}{N^2}\int_{0}^{1}t^{1/N}z(1-zt)^{-2/N-1}\log t dt\,.
	\end{align*}
	From this, by changing the order of integration, we calculate
	\begin{align*}
	\int_{0}^{1}1-|F_N(e^{2\pi i x})|^2dx 
	&\= -\frac{1}{N^2}\int_{0}^{1}\int_{0}^{1}(t_1t_2)^{1/N-1}
	(G_N(t_1t_2)-1)dt_1dt_2\,,\\
	\int_{0}^{1}F_N(e^{2\pi i x})\widetilde{F}_N(e^{-2\pi i x})dx 
	&\= -\frac{1}{N^2}\int_{0}^{1}\int_{0}^{1}(t_1t_2)^{1/N-1}
	t_1t_2G_N'(t_1t_2)\log t_1dt_1dt_2\,,
	\end{align*}
	where
	\[G_N(x)
	\={}_{2}F_{1}(2/N,2/N,1;x)\=\sum_{n\ge0}\frac{(2/N)_n^2}{n!^2}x^n\,.\]
	Next, using the easily verified identity
	\[\int_{0}^{1}\int_{0}^{1}f(xy)\log^kx\frac{dx}{x}\frac{dy}{y} 
	\= -\frac{1}{k+1}\int_{0}^{1}f(t)\log^{k+1} t \frac{dt}{t}\]
	we can rewrite the above double integrals as single integrals.
	Plugging the resulting expressions back into~\eqref{eq:kappa1int} we get
	\begin{equation*}
	2\kappa^{(1)} \= \frac{1}{N^2}\int_{0}^{1}
	((G_N(t)-1)\log t+\tfrac{t}{2} G_N'(t)\log^2t)t^{1/N}\frac{dt}{t}\,.
	\end{equation*}
	Finally, expanding $G_N(t)$ as a power series in $t$ and integrating the 
	above identity term-by-term we obtain
	\begin{equation} \label{eq:kappa1}
	\kappa^{(1)} 
	\= -\frac{1}{2N^{3}}\sum_{n\ge1}\frac{(2/N)_n^2}{n!^2(1/N+n)^3}\,,
	\end{equation}
	which immediately implies~\eqref{eq:constterm}.
\end{proof}
	
Finally, let us prove Theorem~\ref{thm:oddzeta}. For this we will need the 
following lemma to evaluate the right hand side of~\eqref{eq:kappa1}
in terms of gamma function and its derivatives.
\begin{lemma}
	For all $z\not \in \frac{1}{2}\ZZ$ we have
	\begin{equation}\label{eq:trigammafinite}
		\begin{split}
			&\sum_{n=0}^{m}\frac{(2z)_n^2z^3}{n!^2(z+n)^3}\frac{(1+m)_n(-m)_n}{(1+2z+m)_n(2z-m)_n} \\
			&\= 
			\frac{(1+2z)_m(1-z)_m^2}{(1-2z)_m(1+z)_m^2}\Big(1+\sum_{j=1}^{m}\frac{j(j-2z)}{(j-z)^2}-\sum_{j=1}^{m}\frac{j(j+2z)}{(j+z)^2} \Big)
		\end{split}
	\end{equation}
\end{lemma}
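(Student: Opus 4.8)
The plan is to read the left-hand side of~\eqref{eq:trigammafinite} as a terminating well-poised hypergeometric series and to obtain its evaluation from Dougall's summation, the trigamma terms on the right coming from a parameter derivative; a recurrence argument will serve as a self-contained fallback. First I would rewrite the summand: since $z^3/(z+n)^3=\bigl((z)_n/(1+z)_n\bigr)^3$, the left-hand side equals
\[
\sum_{n=0}^{m}\frac{(2z)_n^2\,(z)_n^3\,(1+m)_n\,(-m)_n}{n!^2\,(1+z)_n^3\,(1+2z+m)_n\,(2z-m)_n},
\]
which is a terminating well-poised ${}_7F_6$ with special parameter $a=2z$: pairing $(2z,1)$, $(z,1+z)$ three times, $(1+m,2z-m)$ and $(-m,1+2z+m)$, every numerator/denominator pair sums to $1+2z$ (it is not very-well-poised, which is why no pure $\Gamma$-product evaluation can hold). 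On the other side I would record the elementary identities
\begin{align*}
1+\sum_{j=1}^{m}\frac{j(j-2z)}{(j-z)^2}-\sum_{j=1}^{m}\frac{j(j+2z)}{(j+z)^2}
&= 1+z^2\sum_{j=1}^{m}\Bigl(\frac{1}{(j+z)^2}-\frac{1}{(j-z)^2}\Bigr)\\
&= 1+z^2\,(\log Q_m)''(z),
\end{align*}
where $Q_m(z)=(1-z)_m/(1+z)_m$, while the prefactor is $P_m(z):=(1+2z)_m(1-z)_m^2/\bigl((1-2z)_m(1+z)_m^2\bigr)$; thus the claim reads $\sum_n(\cdots)=P_m(z)\bigl(1+z^2(\log Q_m)''(z)\bigr)$.

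To produce the right-hand side I would start from Dougall's terminating very-well-poised ${}_7F_6$ summation with $a=2z-2\varepsilon$ and its free parameters chosen so that one equals $a$ (generating the second factor $n!^2$ downstairs), two equal $z$, and the last is forced by the two-balance condition; after cancellation the summand becomes a Pochhammer product in $\varepsilon,z,m$ whose value at $\varepsilon\to0$ is exactly $P_m(z)$. The extra factor $z^2/(z+n)^2$ still needed to recover the summand above is then supplied by adjoining one more well-poised pair — i.e.\ passing to the corresponding terminating very-well-poised ${}_9F_8$, which collapses back to our ${}_7F_6$ precisely because one of its parameters equals $a/2$ — and extracting the appropriate Taylor coefficient in $\varepsilon$; differentiating the closed form $P_m(z)$ once more is what converts that Pochhammer product into $P_m(z)$ times $(\log Q_m)''$, matching the right-hand side.

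Because for each fixed integer $m\ge0$ both sides of~\eqref{eq:trigammafinite} are rational functions of $z$, I would keep a robust alternative in reserve: apply creative telescoping (Zeilberger's algorithm) to the summand as a function of $n$ and $m$ to obtain a recurrence in $m$ of order two for the left-hand side (order two being expected for a well-poised ${}_7F_6$ of this ``Liouvillian'' type), then check that $P_m(z)\bigl(1+z^2(\log Q_m)''(z)\bigr)$ satisfies the same recurrence using only the elementary shift relations
\begin{gather*}
\frac{P_{m+1}}{P_m}=\frac{(m+1+2z)(m+1-z)^2}{(m+1-2z)(m+1+z)^2},\\
(\log Q_{m+1})''(z)-(\log Q_m)''(z)=\frac{1}{(m+1+z)^2}-\frac{1}{(m+1-z)^2},
\end{gather*}
and finally verifying the base cases $m=0$ (both sides $=1$) and $m=1$ directly. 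The main obstacle, in the first route, is locating and justifying the precise classical ancestor: the naive ${}_7F_6$ parameter count falls one short of the cubic factor $z^3/(z+n)^3$, forcing one up to the ${}_9F_8$ level and into its degeneration at a parameter value $a/2$, and the step turning an $\varepsilon$-derivative of a Pochhammer product into the trigamma sum needs care. The recurrence route sidesteps all of this, at the price of a longer but entirely mechanical verification.
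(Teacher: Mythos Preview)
Your first route via Dougall/${}_9F_8$ degeneration plus an $\varepsilon$-derivative is not a proof as written: you identify the summand as a well-poised but not very-well-poised ${}_7F_6$, correctly note that this blocks a direct closed-form evaluation, and then outline a limiting procedure whose key steps (which ${}_9F_8$, which specialization, which derivative produces exactly $P_m(z)(1+z^2(\log Q_m)'')$) you yourself flag as the ``main obstacle''. Nothing there can be checked. What survives is your fallback, and that is essentially the paper's argument: the paper also proves the identity by an induction on $m$ driven by Wilf--Zeilberger telescoping.

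The execution differs, and the paper's version is worth knowing. Rather than feeding the raw summand to Zeilberger and hoping for a manageable (you guess second-order) recurrence in $m$, the paper first divides both sides by your $P_m(z)$, so the goal becomes $\sum_n T_{n,m}=z^{-3}\bigl(1+z^2(\log Q_m)''\bigr)$. Then it takes the first difference in $m$ \emph{by hand}: an elementary factoring shows $T_{n,m+1}-T_{n,m}$ has a clean product form, and the target difference on the right is just $-4(m+1)/\bigl((m+1)^2-z^2\bigr)^2$. After one more normalization this reduces to proving $\sum_{n=0}^{m}D_{n,m}=1$ for an explicit hypergeometric term $D_{n,m}$, and only now is WZ invoked, with the rational certificate $R(n,m)$ written out explicitly. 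The advantage over your plan is that the preliminary hand-reductions collapse the problem to a \emph{first-order} recurrence with a short certificate, avoiding the second-order recurrence you anticipate and the attendant verification that the right-hand side satisfies it. Your shift relations for $P_{m+1}/P_m$ and $(\log Q_{m+1})''-(\log Q_m)''$ are exactly the ingredients the paper uses in that first hand step; you were one simplification away from the same proof.
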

\begin{proof}
	We will prove this identity by induction on~$m$, the case $m=0$ being 
	trivial. First, we divide both sides by the product of the Pochhammer 
	symbols on the right to get an equivalent identity
	\begin{align*}
	&\sum_{n=0}^{m}T_{n,m}\=\frac{1}{z^3}
	\Big(1+\sum_{j=1}^{m}\frac{j(j-2z)}{(j-z)^2}-\sum_{j=1}^{m}\frac{j(j+2z)}{(j+z)^2}\Big)\,,
	\end{align*}
	where we denote
	\[T_{n,m} \coloneqq \frac{(1-2z)_m(1+z)_m^2}{(1+2z)_m(1-z)_m^2}
	\frac{(2z)_n^2}{n!^2(z+n)^3}\frac{(1+m)_n(-m)_n}{(1+2z+m)_n(2z-m)_n}\,.\]
	
	Then it is enough to show that
	\[\sum_{n=0}^{m+1}(T_{n,m+1}-T_{n,m}) \= 
	-\frac{4(m+1)}{(m+1-z)^2(m+1+z)^2}\,.\]
	First, an elementary calculation shows that for $0\le n \le m+1$ we have
	\begin{align*}
	T_{n,m+1}-T_{n,m} \= \frac{4z(2z)_n^2}{n!^2(z+n)}
	\frac{(1-2z)_{m+1}(1+z)_m^2}{(1-z)_{m+1}^2(1+2z)_m}
	\frac{(1+m)_{n}(-m-1)_n}{(1+2z+m)_{n+1}(2z-m-1)_{n+1}}\,.
	\end{align*}
	Therefore, it suffices to show that for $m\ge 1$ we have
	\[\sum_{n=0}^{m}D_{n,m} \= 1\,,\]
	where 
	\[D_{n,m} \= -\frac{z(m-z)^2(2z)_n^2}{n!^2(z+n)m}
	\frac{(1-2z)_{m}(1+z)_{m}^2}{(1+2z)_{m+n}(1-z)_{m}^2}
	\frac{(m)_{n}(-m)_n}{(2z-m)_{n+1}}\,.\]
	The last identity can be easily proved using the Wilf-Zeilberger 
	method~\cite{WZ}. Explicitly, using the identities
	\begin{align*}
		\frac{D_{n,m+1}}{D_{n,m}} &\= \frac{(m+1+z)^2(m-n-2z)(m+n)(m+1)}{(m-z)^2(m+n+1+2z)m(m-n+1)} \\
		\frac{D_{n+1,m}}{D_{n,m}} &\= \frac{(n+z)(n+2z)^2(m-n)(m+n)}{(n+1+z)(m-n-1-2z)(m+n+1+2z)(n+1)^2}
	\end{align*}
	one can verify that
	\begin{equation} \label{eq:telescopy}
	D_{n,m+1} - D_{n,m} = G_{n+1,m}-G_{n,m}\,,\qquad n=0,\dots,m+1\,,
	\end{equation}
	where $G_{n,m} = D_{n,m}R(n,m)$ and 
	\[R(n,m) \= \frac{n^2(2m + 1)(z + n)(m - n - 2z)(2m^2z + (m+z)^2 - (n+z)^2 + m + n + 2z)}{4m^2(m + 1)^2(n-m-1)(m - z)^2z}\,.\]
	(We regularize $G_{n,m}$ for $n=m+1$ by canceling the factor $(n-m-1)$ in 
	the denominator of $R(n,m)$ with the factor $(n-1-m)$ coming from the 
	Pochhammer symbol $(-m)_n$ in the definition of $D_{n,m}$.)
	Finally, the identity $\sum_{n=0}^{m}D_{n,m} = 1$ then follows by induction 
	on $m$ from~\eqref{eq:telescopy} together with $G_{0,m}=G_{m+2,m}=0$.
\end{proof}
\begin{corollary}
For all $z\in\CC$ with $|z|<1$ we have
\begin{equation} \label{eq:trigamma}
	\sum_{n\ge0}\frac{(2z)_n^2z^3}{n!^2(z+n)^3}\=
	\frac{\Gamma^2(1+z)\Gamma(1-2z)}{\Gamma^2(1-z)\Gamma(1+2z)} \Big(1+z^2\psi^{(1)}(1+z)-z^2\psi^{(1)}(1-z)\Big)\,,
\end{equation}
where $\psi^{(k)}(z) = \frac{d^{k+1}}{dz^{k+1}}\log\Gamma(z)$ is the $k$-th 
polygamma function. 
\end{corollary}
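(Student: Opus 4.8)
The plan is to obtain~\eqref{eq:trigamma} by taking the limit $m\to\infty$ in the finite identity~\eqref{eq:trigammafinite} of the preceding lemma (which holds for all $z\notin\tfrac12\ZZ$). It suffices to prove~\eqref{eq:trigamma} for real $z\in(0,\tfrac12)$ and then invoke analytic continuation.

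\emph{Right-hand side.} Writing each Pochhammer symbol as a ratio of Gamma functions, the prefactor of~\eqref{eq:trigammafinite} becomes
\[
\frac{(1+2z)_m(1-z)_m^2}{(1-2z)_m(1+z)_m^2}
\= \frac{\Gamma(1-2z)\Gamma(1+z)^2}{\Gamma(1+2z)\Gamma(1-z)^2}
\cdot\frac{\Gamma(1+2z+m)\Gamma(1-z+m)^2}{\Gamma(1-2z+m)\Gamma(1+z+m)^2}\,,
\]
and by $\Gamma(x+m)\sim\Gamma(m)m^{x}$ (Stirling) the $m$-dependent factor tends to $1$, so the prefactor converges to the Gamma quotient appearing in~\eqref{eq:trigamma}. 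For the bracketed term I would use the elementary identities $\frac{j(j\mp2z)}{(j\mp z)^2}=1-\frac{z^2}{(j\mp z)^2}$, which collapse it to $1+z^2\sum_{j=1}^{m}\!\big(\tfrac{1}{(j+z)^2}-\tfrac{1}{(j-z)^2}\big)$; letting $m\to\infty$ and using $\psi^{(1)}(x)=\sum_{k\ge0}(k+x)^{-2}$ gives $1+z^2\psi^{(1)}(1+z)-z^2\psi^{(1)}(1-z)$.

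\emph{Left-hand side.} Since the factor $(-m)_n$ vanishes for $n>m$, the sum $\sum_{n=0}^{m}$ in~\eqref{eq:trigammafinite} is in fact $\sum_{n\ge0}$ with the additional weight $b_{n,m}\coloneqq\frac{(1+m)_n(-m)_n}{(1+2z+m)_n(2z-m)_n}$, and expanding the Pochhammer symbols as products shows that $b_{n,m}\to1$ as $m\to\infty$ for each fixed~$n$. The main obstacle is to justify interchanging this limit with the summation, because $b_{n,m}$ is \emph{not} uniformly bounded: it equals $\frac{\Gamma(m+1)\Gamma(m+1-2z-n)}{\Gamma(m+1-n)\Gamma(m+1-2z)}$ times a factor of modulus $\le 1$, which for $n$ of order~$m$ is itself of order~$m^{2z}$. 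To handle this I would combine the bound $|b_{n,m}|\le C(z)\big(\tfrac{m}{m-n+1}\big)^{2z}$, valid for $0\le n\le m$, with $\big|\tfrac{(2z)_n^2z^3}{n!^2(z+n)^3}\big|\le C\,n^{4z-5}$, and split the tail $\sum_{n=N_0+1}^{m}$ at $n=m/2$: for $n\le m/2$ the weight $b_{n,m}$ is $O(1)$, so that part of the sum is controlled by the tail of the convergent series $\sum_n n^{4z-5}$; for $m/2<n\le m$ a direct estimate of the resulting sum gives a bound of order $m^{4z-4}\to0$. A standard $\eps/3$ argument (first fixing $N_0$ large, then $m$ large) then yields $\lim_{m\to\infty}\sum_{n=0}^{m}\tfrac{(2z)_n^2z^3}{n!^2(z+n)^3}b_{n,m}=\sum_{n\ge0}\tfrac{(2z)_n^2z^3}{n!^2(z+n)^3}$ for $z\in(0,\tfrac12)$.

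Putting the two computations together proves~\eqref{eq:trigamma} on $(0,\tfrac12)$. Finally, for $|z|<1$ the left-hand side of~\eqref{eq:trigamma} is a locally uniformly convergent series of holomorphic functions, hence holomorphic, and the right-hand side is holomorphic on $\{|z|<1\}$ as well (the apparent pole of $\Gamma(1-2z)$ at $z=\tfrac12$ being cancelled by the vanishing of the second factor there), so the identity propagates from $(0,\tfrac12)$ to all of $\{|z|<1\}$ by analytic continuation.
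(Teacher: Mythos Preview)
Your proof is correct and follows the same approach as the paper: pass to the limit $m\to\infty$ in the finite identity~\eqref{eq:trigammafinite}, evaluate the right-hand side via Stirling and the series representation of $\psi^{(1)}$, and then extend to all of $|z|<1$ by analyticity. The only difference is that you supply the dominated-convergence justification for the left-hand side in detail (the paper simply asserts the limit) and argue by analytic continuation from the real interval $(0,\tfrac12)$ rather than directly on $\{\re z<1\}\setminus\tfrac12\ZZ$.
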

\begin{proof}
For $\re z<1$ and $z\not\in\frac{1}{2}\ZZ$ simply take the limit $m\to+\infty$ 
in~\eqref{eq:trigammafinite} using the fact that
$(x)_n \= \frac{\Gamma(x+n)}{\Gamma(x)}\sim \frac{\Gamma(n)n^{x}}{\Gamma(x)}$, $n\to\infty$, together with 
	\[\sum_{j=1}^{m}\frac{j(j\pm 2z)}{(j\pm z)^2} \= 
	m - z^2\psi^{(1)}(1\pm z)+z^2\psi^{(1)}(1+m\pm z)\,.\] 
Noting that both sides of~\eqref{eq:trigamma} are non-singular also for 
$z=0,\pm 1/2$ we obtain the claim for all $|z|<1$.
\end{proof}

\begin{proof}[Proof of Theorem~\ref{thm:oddzeta}]
	The claim for $C_n(0)$ immediately follows from~\eqref{eq:constterm}
	and~\eqref{eq:gammaprodexp}. Differentiating the classical identity
	\[	\log\Gamma(1-z) \= \gamma z +\sum_{n\ge2}\zeta(n) \frac{z^n}{n}\,,\]
	twice shows that
	\[z^2\psi^{(1)}(1-z)-z^2\psi^{(1)}(1+z) \= 
	\sum_{k\ge 1}4k\zeta(2k+1)z^{2k+1}\,,\] 
	and thus~\eqref{eq:constterm},~\eqref{eq:gammaprodexp} and~\eqref{eq:trigamma} together imply the claim for $C_n'(0)$.
\end{proof}


\begin{thebibliography}{99}
\bibitem{AF} P.~Antunes, P.~Freitas, \emph{New bounds for the principal Dirichlet eigenvalue of planar regions}, Experiment. Math. \textbf{15}, pp.~333--342 (2006).


\bibitem{BJMR} D.~Berghaus, R.~Jones, H.~Monien, D.~Radchenko, 
\emph{Computation of the lowest Laplacian eigenvalue of regular polygons}, in preparation.

\bibitem{BBV} J.~Bl\"{u}mlein, D.~J.~Broadhurst, and J.~A.~M.~Vermaseren, \emph{The multiple {z}eta {v}alue {d}ata {m}ine}, Comput. Phys. Comm.~\textbf{181} (3): 582--625 (2010).

\bibitem{Bo} M.~Boady, \emph{Applications of symbolic computation to the calculus of moving surfaces}, PhD thesis, Drexel University, Philadelphia, PA, 2015.

\bibitem{Br1} F.~C.~S.~Brown, \emph{Polylogarithmes multiples uniformes en une variable}, C.~R.~Acad.~Sci. Paris, Ser. I, \textbf{338}, pp.~527--532 (2004).

\bibitem{Br2} F.~C.~S.~Brown, \emph{Single-valued periods and multiple zeta values}, Forum of Mathematics, Sigma, Vol.~\textbf{2}, no.~\textbf{e25} p.~37 (2014).

\bibitem{BF} J.~I.~Burgos~Gil, J.~Fres\'an, \emph{Multiple zeta values: from numbers to motives}, Clay Math. Proceedings, to appear.
	
\bibitem{Cha} I.~Chavel, \emph{Eigenvalues in Riemannian geometry}, with a chapter by Burton Randol and an appendix by Jozef Dodziuk, Academic Press, Inc.    (Harcourt Brace Jovanovich, Publishers), Orlando, San Diego, New York, London, Toronto, Montreal, Sydney, Tokyo, 1984.

\bibitem{CGR} S.~Charlton, H.~Gangl, D.~Radchenko, \emph{On functional equations for Nielsen polylogarithms}, \arXiv{1908.04770}.

\bibitem{FoHeMo} L.~Fox, P.~Henrici, C.~B.~Moler, \emph{Approximations and bounds for eigenvalues of elliptic operators}, SIAM J.~Numer.~Anal., \textbf{4}, pp.~89--102 (1967).

\bibitem{GS1} P.~Grinfeld, G.~Strang, \emph{The Laplacian eigenvalues of a polygon}, Computers and Mathematics with Applications, \textbf{48}, pp.~1121--1133 (2004).

\bibitem{GS2} P.~Grinfeld, G.~Strang, \emph{Laplace eigenvalues on regular polygons: A series in 1/N.} J. Math. Anal. Appl., \textbf{385}, pp.~135--149 (2012).

\bibitem{NPH} M.~Hoang~Ngoc, M.~Petitot, J.~van~der~Hoeven, \emph{Polylogarithms and shuffle algebra}, in FPSAC '98, Toronto, Canada, 1998.

\bibitem{Jo} R.~Jones, \emph{The fundamental Laplacian eigenvalue of the regular polygon with Dirichlet boundary conditions}, \arXiv{1712.06082}.

\bibitem{Julia} J.~Bezanson, A.~Edelman, S.~Karpinski, V.B.~Shah, \emph{Julia: A fresh approach to numerical computing}, SIAM {R}eview \textbf{59 (1)}, pp.~65--98 (2017).

\bibitem{Ko} K.~S.~K\"{o}lbig, \emph{Nielsen's generalized polylogarithms},
SIAM J. Math. Anal., \textbf{17(5)}, pp.~1232--1258, (1986).

\bibitem{Mo}  L.~Molinari, \emph{On the ground state of regular polygonal billiards}, J. Phys. A: Math. Gen., \textbf{30(18)}, pp.~6517--6524 (1997).

\bibitem{MoPa} C.~B.~Moler, L.~E.~Payne, \emph{Bounds for eigenvalues and eigenvectors of symmetric operators}, SIAM J.~Numer.~Anal.~\textbf{5}, pp.~64--70 (1968).

\bibitem{Ni} C.~Nitsch, \emph{On the first Dirichlet Laplacian eigenvalue of regular polygons}. Kodai Mathematical Journal, \textbf{37}, pp.~595--607 (2014).

\bibitem{Oi} V.~K.~Oikonomou. \emph{Casimir energy for a regular polygon with Dirichlet boundaries}, \arXiv{1012.5376}.

\bibitem{So} A.~Yu.~Solynin, \emph{Isoperimetric inequalities for polygons and dissymetrization}, Algebra i Analiz, \textbf{4:2}, pp.~210--234 (1992); St. Petersburg Math. J., \textbf{4:2}, pp.~377--396 (1993).     

\bibitem{SAGE}
W.\thinspace{}A. Stein et~al., \emph{{S}age {M}athematics {S}oftware
({V}ersion 9.1)}, The Sage Development Team, 2020, {\tt
http://www.sagemath.org}.

\bibitem{Ve} I.~N.~Vekua, \emph{New method for solving elliptic equations}, North-Holland Publishing Company, Amsterdam, 1967.

\bibitem{Vi} M.~A.~Virasoro, \emph{Alternative constructions of crossing-symmetric amplitudes with Regge behaviour}, Phys.~Rev., \textbf{177}, pp.~2309--2311 (1969).

\bibitem{W} M.~Waldschmidt, \emph{Lectures on Multiple Zeta Values}, IMSC 2011.

\bibitem{WZ} H.~Wilf, D.~Zeilberger, \emph{Rational functions certify combinatorial identities}, J.~Amer.~Math.~Soc.~\textbf{3}, pp.~147--158 (1990).

\bibitem{Za} D.~Zagier, \emph{Values of zeta functions and their applications}, in: Proceedings of ECM~1992, Progress in Math.~\textbf{120}, pp.~497--512 (1994).

\bibitem{Ze}  S.~Zelditch, \emph{Eigenfunctions of the Laplacian on a Riemannian Manifold}, CBMS Regional Conference Series in Mathematics Vol. 125, 2017.
\end{thebibliography}
\end{document}